\newtheorem{thm}{Theorem}[section]
\newtheorem{prop}[thm]{Proposition}
\newtheorem{lema}[thm]{Lemma}
\newtheorem{defi}[thm]{Definition}
\newtheorem{ex}[thm]{Example}
\theoremstyle{remark}
\newtheorem{remark}{Remark}
\begin{document}

\title{Irreducible Highest Weight Representations Of The Simple n-Lie Algebra}
\author{Dana B\v alibanu, Johan van de Leur\\
\ 
\\
Department of Mathematics,\\
Utrecht University,\\
P.O. Box 80.010,\\
3508 TA Utrecht,\\
The Netherlands\\
\ 
\\
email: D.M.Balibanu@uu.nl,\ 
J.W.vandeLeur@uu.nl}
\date{}
\maketitle

\begin{abstract}
A. Dzhumadil'daev classified all irreducible finite dimensional
representations of the simple $n$-Lie algebra. Using a slightly
different approach, we obtain in this paper a complete
classification of all irreducible, highest weight modules, including
the infinite-dimensional ones. As a corollary we find all primitive
ideals of the universal enveloping algebra of this simple $n$-Lie
algebra.
\end{abstract}





\section{Introduction}
\label{j8} \noindent In 1985 Filippov \cite{Filippov} introduced a
generalization of a Lie algebra, which he called an $n$-Lie algebra.
The Lie product is taken between $n$ elements of the algebra instead
of two. This new bracket is $n$-linear, anti-symmetric and satisfies
a generalization of the Jacobi identity. For $n=3$ this product is a
special case of the Nambu bracket, well known in physics, which was
introduced by Nambu \cite{N} in 1973, as a generalization of the
Poisson bracket in Hamiltonian mechanics. In recent years this type
of Lie algebra has appeared in the physics literature. For example a
metric 3-Lie algebra is used in the Lagrangian description of a
certain 2+1 dimensional field theory, the so called
Bagger-Lambert-Gustavsson theory \cite{BL}-\cite{G3}. This has lead
to a dramatic increase of interest in $n$-Lie algebras in recent
years.\\
\noindent Let $n$ be a natural number greater or equal to 3. An
$n$-Lie algebra is a natural generalization of a Lie algebra.
Namely:
\begin{defi}A vector space $V$ together with a multi-linear, antisymmetric $n$-ary operation $[,]:\wedge^nV\to
V$ is called an $n$-Lie algebra, $n\geq 3$, if the $n$-ary bracket
is a derivation with respect to itself, i.e,
\begin{equation}
\label{j1}
[[x_1,\ldots,x_n],x_{n+1},\ldots,x_{2n-1}]=\sum_{i=1}^n[x_1,\ldots,[x_i,x_{n+1},\ldots,x_{2n-1}],\ldots,x_n],
\end{equation}
where $x_1,\ldots,x_{2n-1}\in V$.
\end{defi}
\noindent Equation (\ref{j1}) is called \emph{the generalized Jacobi
Identity}.
 \noindent We will only consider $n$-Lie algebras over the
field of complex numbers. \\
\\
\noindent In \cite{Ling} W. Ling has shown that for every $n\geq 3$
there is, up to isomorphism, only one simple $n$-Lie algebra, namely
$\mathbb{C}^{n+1}$ and the operation being given by the generalized
vector product. Denote by $e_1,\ldots,e_{n+1}$ the standard basis of
$\mathbb{C}^{n+1}$, then the $n$-ary bracket is given by:
\[[e_1,\ldots,\hat{e_i},\ldots,e_{n+1}]=(-1)^{n+i+1}e_i,\] where $i$
ranges from 1 to $n+1$ and the hat means that $e_i$ does not
appear in the bracket.\\
\\
 \noindent
A. Dzhumadil'daev studied in \cite{Dzhumadil'daev} the
finite-dimensional, irreducible representation of the simple $n$-Lie
algebra. It is our aim to classify in the present paper both finite-
and infinite-dimensional,
irreducible, highest weight representations of this algebra. The annihilators of these modules are the primitive ideals of the universal enveloping algebra of the simple $n$-Lie algebra.\\
\\
\noindent To every $n$-Lie algebra, one can associate a \emph{ basic
Lie algebra}. For the simple $n$-Lie algebra, denoted throughout
this paper by $A$, the associated basic Lie algebra is $so(n+1)$.
Finding irreducible, highest weight representations of $A$ is
equivalent to finding irreducible highest weight representations of
$so(n+1)$ on which some two-sided ideal, of the universal enveloping
algebra of $so(n+1)$, acts trivially. The method used in this paper
has the advantage of recovering both finite- and
infinite-dimensional, irreducible, highest weight representations.
The annihilators of these modules will supply the primitive ideals
we are searching for. Below, we give the statement of the main
theorem we obtained. The two statements are similar in
nature, the only difference being given by the parity of $n+1$.\\
\\
\noindent Denote by $Z(\lambda)$ the unique irreducible quotient of
the  Verma module of $so(n+1)$ which has highest weight $\lambda$.

\begin{thm}
\label{j6} Let $n\geq 3$, $n+1=2N$ and $t\in\{1,\ldots,N\}$. Denote
by $\pi_1,\ldots,\pi_N$ the fundamental weights of $so(2N)$. Then,
$Z(\lambda)$ is an irreducible representation of the simple $n$-Lie
algebra $A$ if and only if
\begin{displaymath}
\lambda=\left\{
\begin{array}{ll}
x\pi_1 & t=1,\\
(-1-x)\pi_{t-1}+x\pi_t & 1<t<N-1,\\
(-1-x)\pi_{N-2}+x\pi_{N-1}+x\pi_N & t=N-1,\\
(-1-x)\pi_{N-1}+(-1+x)\pi_N & t=N,
\end{array}
\right.
\end{displaymath}
 where $x\in\mathbb{C}.$
\end{thm}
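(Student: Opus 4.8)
The plan is to move the whole problem into the representation theory of $\mathfrak g:=so(n+1)=so(2N)$. As recalled above, the Hodge isomorphism $\wedge^{n-1}\mathbb C^{n+1}\cong\wedge^{2}\mathbb C^{n+1}\cong so(n+1)$ turns the first module axiom of the $n$-Lie algebra $A$ into the condition that the underlying space be a $\mathfrak g$-module, while the second (quadratic) module axiom becomes the condition that a fixed two-sided ideal $\mathcal I\triangleleft U(\mathfrak g)$ annihilate it. Hence $Z(\lambda)$ is an $A$-module if and only if $\mathcal I\subseteq\mathrm{Ann}_{U(\mathfrak g)}Z(\lambda)$; and since $Z(\lambda)=U(\mathfrak g)\,v_\lambda$ and $\mathcal I$ is two-sided, $\mathcal I Z(\lambda)=\mathcal I v_\lambda$, so the condition is simply $\mathcal I v_\lambda=0$. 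The first real step is to pin $\mathcal I$ down explicitly: being $\mathrm{ad}\,\mathfrak g$-stable it is generated by finitely many irreducible $\mathrm{ad}\,\mathfrak g$-submodules $U_1,\dots,U_m\subset U(\mathfrak g)$ of degree $\le 2$ coming from the $n$-Lie module axiom — one of them the line spanned by $C-c$ ($C$ the Casimir, $c$ the scalar the axiom forces), the others lying in the ``trace-free'' isotypic pieces of $S^{2}\mathfrak g=S^{2}(\wedge^{2}\mathbb C^{n+1})$.

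Next I would reduce $\mathcal I v_\lambda=0$ to an explicit finite system of conditions on $\lambda$. Since each $U_j$ is $\mathrm{ad}$-stable one has $\mathcal I=\sum_j U(\mathfrak g)U_j$, so $\mathcal I v_\lambda=0$ iff $U_j v_\lambda=0$ for every $j$; and because $U_j\otimes Z(\lambda)$ is generated over $\mathfrak g$ by $U_j\otimes v_\lambda$, this is in turn equivalent to vanishing of the $\mathfrak g$-equivariant map $U_j\otimes Z(\lambda)\to Z(\lambda)$. Evaluating on suitable weight-vector generators of $U_j$ and expanding inside the Verma module $M(\lambda)$ — only the depth-$\le 2$ part of $M(\lambda)$ is involved — each $U_j$ contributes finitely many polynomial identities $P_k(\lambda)=0$ in the coordinates of $\lambda$: the $C-c$ generator contributes a value of the Casimir, the quadratic generators the remaining relations.

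It then remains to solve $\{P_k(\lambda)=0\}$. Writing $\ell_i:=(\lambda+\rho,\epsilon_i)$ for the standard coordinates of $so(2N)$, I expect the system to collapse to the single combinatorial condition
\[
\{\ell_i : i\neq t\}=\{0,1,\dots,N-2\},\qquad \ell_t\in\mathbb C,
\]
that is, $\lambda+\rho$ is ``as singular as possible'' (equivalently, $Z(\lambda)$ has minimal Gelfand--Kirillov dimension among the $Z(\mu)$, which is what $\mathcal I\subseteq\mathrm{Ann}\,Z(\lambda)$ should force). Using $\rho=(N-1,N-2,\dots,1,0)$ in these coordinates and translating back through the fundamental weights $\pi_1,\dots,\pi_N$ — taking care of the two spin nodes $\pi_{N-1},\pi_N$, where the $\epsilon_N$-coordinate enters with a sign — the $N$ possible positions $t$ of the free coordinate $\ell_t$ yield exactly the four displayed families, with $x\in\mathbb C$ the free coordinate up to the indicated $\rho$-shift. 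This also settles the ``if'' direction: for $\lambda$ on the list one checks that the chosen generators of $\mathcal I$ annihilate $v_\lambda$ in $Z(\lambda)$, hence $\mathcal I Z(\lambda)=0$.

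The hard part is the converse, i.e.\ completeness of the list. One must show that $\mathcal I\subseteq\mathrm{Ann}\,Z(\lambda)$ really forces all of $\{P_k(\lambda)=0\}$ (here the subtlety is that vanishing is required only modulo the maximal submodule of $M(\lambda)$, not in $M(\lambda)$ itself), and that this system has no solutions beyond the stated lines — equivalently, that $Z(\lambda)$ being a module over $U(\mathfrak g)/\mathcal I$ already pins $\lambda+\rho$ down to the near-maximal singularity above. I also expect the explicit determination of $\mathcal I$ — the decomposition of $S^{2}\mathfrak g$ and the identification of which components, with which scalars, the $n$-Lie module axiom produces — to require the bulk of the bookkeeping; once that is in place the ``if'' direction is a finite, if tedious, verification, and ``only if'' follows by checking that the solution set of the system is precisely the union of the four families, one per choice of $t$.
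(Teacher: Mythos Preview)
Your framework is right, but two things differ from the paper. First, a minor correction: there is no Casimir generator. The generating space $R$ of $Q(A)$ is a single irreducible $\mathfrak g$-submodule of $S^2\mathfrak g$, isomorphic to $\wedge^4\mathbb C^{n+1}$ (the paper writes down the explicit isomorphism $\wedge^4 A\to R\subset S^2(\wedge^2 A)$), and in the decomposition of $S^2\mathfrak g$ it lies in the complement of $V(0)\oplus V(2\alpha)$. No equation of the form $C=c$ appears; all the content comes from this one piece.

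Second, and this is the genuine gap: the difficulty you correctly flag as ``the hard part''---that $Rv_\lambda=0$ must hold in $Z(\lambda)$ rather than in the Verma module---is exactly what the paper's key lemma dissolves, and you offer no mechanism for it. The lemma runs as follows: $\hat Q(A):=(Q(A)+I(\lambda))/I(\lambda)$ is a submodule of $V(\lambda)$, so $Q(A)\subseteq J(\lambda)$ iff $pr_\lambda(\hat Q(A))=0$; an induction using $[\mathfrak g,R]\subseteq R$ then reduces this to $pr_\lambda(\hat R)=0$. Only the weight-zero part of $R$ contributes to this projection, and for weight-zero $r_0$ one has $r_0\cdot\mathbb 1\in V(\lambda)_\lambda=\mathbb C\,\mathbb 1$, so the Verma/irreducible discrepancy vanishes. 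In the root basis the weight-zero elements of $R$ are exactly the $v_{a,a,c,c}(1,-1,1,-1)$ for $1\le a<c\le N$, and a two-line computation gives $pr_\lambda(v_{a,a,c,c}(1,-1,1,-1)\cdot\mathbb 1)=\lambda_c(\lambda_a+1)$. Hence the necessary and sufficient condition is $\lambda_c(\lambda_a+1)=0$ for all $a<c$, which yields $\lambda=(-1,\ldots,-1,x,0,\ldots,0)$ at once---both directions simultaneously, with no residual completeness question. Incidentally, your anticipated description $\{\ell_i:i\ne t\}=\{0,\ldots,N-2\}$ as an unordered set is too weak: these equations pin down the ordering of the $\ell_i$ as well.
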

\begin{thm}
\label{j7} Let $n\geq 3$, $n+1=2N+1$ and $t\in\{1,\ldots,N\}$.
Denote the fundamental weights of $so(2N+1)$ by
$\pi_1,\ldots,\pi_N$. Then, $Z(\lambda)$ is an irreducible highest
weight representation of the simple $n$-Lie algebra $A$ if and only
if
\begin{displaymath}
\lambda=\left\{
\begin{array}{ll}
x\pi_1 & t=1,\\
(-1-x)\pi_{t-1}+x\pi_t & 1<t\leq N,
\end{array}
\right.
\end{displaymath}
where $x\in\mathbb{C}.$
\end{thm}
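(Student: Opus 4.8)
The plan is to exploit the equivalence recalled in the introduction: a highest weight representation of $A$ is the same thing as a highest weight $so(n+1)$-module annihilated by a fixed two-sided ideal $I\subseteq U(so(n+1))$. So I would first make $I$ explicit --- writing out the relations that a representation of $A$ must satisfy in terms of the generators of $so(n+1)$, one finds that $I$ is generated, as a two-sided ideal, by finitely many elements of the form (quadratic in the generators) $-$ (linear) $-$ (scalar). Here $n+1=2N+1$, so the basic Lie algebra is $so(2N+1)$ of type $B_N$; write $\pi_1,\dots,\pi_N$ for its fundamental weights, $\rho=\pi_1+\dots+\pi_N$, and $M(\lambda)$ for the Verma module with highest weight vector $v_\lambda$. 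Since $I$ is two-sided and $Z(\lambda)$ is irreducible, $I\,Z(\lambda)=0$ is equivalent to $I\,M(\lambda)\subseteq\operatorname{rad}M(\lambda)$; and as $I\,M(\lambda)=I\,v_\lambda$ is itself a submodule while $I$ is $\mathfrak h$-graded, this holds if and only if the weight-zero component $I_0$ kills $v_\lambda$. Hence the admissible highest weights form the common zero set in $\mathfrak h^{*}\cong\mathbb C^{N}$ of the polynomials giving the scalar by which an element of $I_0$ acts on $v_\lambda$, and the theorem is the computation of that set.

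For the direction ``admissible $\Rightarrow$ on the list'' I would produce enough weight-zero elements of $I$ to cut the locus down. Each generator of $I$ yields weight-zero elements after multiplication on either side by suitable root vectors of $so(2N+1)$, and the scalar by which such an element acts on $v_\lambda$ is a polynomial in $\lambda$, quadratic because the generators are quadratic-minus-linear (one of them is essentially the Casimir, contributing the quadric $(\lambda,\lambda+2\rho)=\ell(\lambda)+c$ with $\ell$ linear). The core of this step is to organise these polynomials so that they factor: I expect the conditions to be equivalent to the simultaneous vanishing of products of linear forms $\langle\lambda+\rho,\cdot\rangle$ evaluated against coroots and against the weights of the defining representation $\mathbb C^{2N+1}$, together with a case analysis --- governed by the shape of the $B_N$ diagram --- of which factors may vanish at once. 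This should force the $\rho$-shifted Dynkin labels $\langle\lambda+\rho,\alpha_i^{\vee}\rangle$ to equal $1$ for all $i$ except at two consecutive nodes $t-1,t$, where they equal $-x$ and $x+1$ for some $x\in\mathbb C$ (the node $t-1$ being simply absent when $t=1$); rewriting in the $\pi_i$-basis then gives exactly the displayed list.

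The converse is the delicate part. For $\lambda$ on the list one must check that \emph{all} of $I$ annihilates $Z(\lambda)$, i.e. that $I\,v_\lambda\subseteq\operatorname{rad}M(\lambda)$ --- and when $M(\lambda)$ is already irreducible this means $I\,v_\lambda=0$ literally inside the Verma module, a nontrivial identity. I see two routes. The algebraic one is to pin down the structure of $M(\lambda)$ for these $\lambda$ --- the label pattern suggests a singular vector of weight $s_{\alpha_t}\cdot\lambda$ or $s_{\alpha_{t-1}}\cdot\lambda$, with the Jantzen sum formula as the general tool --- and then to verify by a direct Poincar\'e--Birkhoff--Witt computation that $a\,v_\lambda$ lands in $\operatorname{rad}M(\lambda)$ for every generator $a$ of $I$, the point being that the resulting coefficients vanish precisely on the lines. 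The more conceptual route is to realise the modules concretely as representations of $A$: $Z(x\pi_1)$ on the space of degree-$x$ solutions of the natural second-order operator attached to the quadratic form on $\mathbb C^{2N+1}$ (generalised harmonics, for arbitrary complex $x$), and each family with $t>1$ on an analogous space of harmonic differential forms of ``type $t$'' --- equivalently a suitable $\mathcal D$-module on the isotropic cone --- and then check that the $n$-ary bracket acts as required and that the module is irreducible with the stated highest weight. Either way the real obstacle is the same: controlling the action of the whole ideal $I$, not just its effect on $v_\lambda$ --- in the first route the vanishing of the PBW coefficients along the lines, in the second the irreducibility and highest-weight identification of the explicit models.

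Finally, Theorem~\ref{j6} is proved by the identical scheme with $so(2N)$ of type $D_N$ in place of $B_N$. The single structural change is the fork at the end of the $D_N$ diagram: the two ``spin'' nodes $N-1$ and $N$ together play the role of the node $N$ of $B_N$, which is precisely why for $t=N-1$ both $\pi_{N-1}$ and $\pi_N$ occur symmetrically and for $t=N$ one obtains $(-1+x)\pi_N$ in place of $x\pi_N$. All the steps above carry over once the $D_N$ root-system bookkeeping is done.
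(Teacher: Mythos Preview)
Your reduction ``$I\,Z(\lambda)=0 \Leftrightarrow I_0\,v_\lambda=0$'' is correct, but you then treat the two directions asymmetrically because $I_0$ is infinite-dimensional: you pick out a few elements of $I_0$ to get necessary conditions, and for sufficiency you reach for Jantzen filtrations or explicit geometric models. The paper avoids all of this by a single observation you are missing: the ideal $I=Q(A)$ is generated by a \emph{finite-dimensional} $so(n+1)$-submodule $R\subset S^2(so(n+1))$, namely $R\simeq\wedge^4 A$, spanned by the purely quadratic elements $x_{i,j,k,l}=e^{ij}e^{kl}-e^{ik}e^{jl}+e^{il}e^{jk}$. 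Because $R$ is a module, the one-sided and two-sided ideals it generates coincide, and an easy induction (commuting $r\in R$ past negative root vectors) shows that $pr_\lambda(\hat Q(A))=0$ if and only if $pr_\lambda(\hat R)=0$. So the admissible locus is cut out \emph{exactly} by the scalars with which the weight-zero part $R_0$ acts on $v_\lambda$, and $R_0$ is finite-dimensional. Both directions of the theorem are then the \emph{same} finite computation; no singular vectors, Jantzen sums, or $\mathcal D$-module realisations are needed.

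Two specific corrections. First, the generators are homogeneous of degree two in $S^2(so(n+1))$, not ``quadratic minus linear minus scalar''; and the Casimir is not among them, since $R\simeq\wedge^4 A$ contains no $so(n+1)$-invariant. Second, once one passes to the root-vector basis $v_{\pm j}\wedge v_{\pm k}$, the weight-zero elements of $R$ are precisely the $v_{a,a,c,c}(+,-,+,-)$ for $1\le a<c\le N$ (in the odd case the extra generators involving the index $2N+1$ contribute no new weight-zero elements), and each acts on $v_\lambda$ by the scalar $\lambda_c(\lambda_a+1)$. The simultaneous vanishing of these $\binom{N}{2}$ products is equivalent to the existence of $t$ with $\lambda_1=\cdots=\lambda_{t-1}=-1$, $\lambda_t=x$, $\lambda_{t+1}=\cdots=\lambda_N=0$, which is the list in the statement after rewriting in the $\pi_i$-basis. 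Your proposed detour through the Casimir quadric and explicit harmonic models would eventually get there, but it is substantially heavier than what the problem requires.
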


\noindent Chapter 2 gives an introduction to the theory of $n$-Lie
algebras. It presents all the definitions used in this paper.
Chapter 3 describes the strategy adopted for solving the problem. It
also introduces a graphical language which will be very useful
further on. The details of the strategy are presented in Chapter 4.
Here the main results are stated and two ways of arriving to the
same conclusion are presented. The final chapter is concerned with
primitive ideals. The authors would like to express their gratitude
to V. Kac, E. van den Ban and I. M\v arcu\c t for their interest in
this work and for all the good suggestions. 
We  would also like to thank X. Garcia-Martinez, R. Turdibaev, T. van der Linden for pointing out a mistake in a previous version of the article (see  the footnote below).

\section{Main definitions}
\noindent In this section we will give a short introduction to
$n$-Lie algebras. We will concentrate mainly on the most important
definitions and some useful theorems.
\subsection{n-Lie algebras and their basic Lie algebras}

\noindent As mentioned in the introduction, an $n$-Lie algebra is a
vector space $V$, over the field of complex numbers, together with
an $n$-ary operation that satisfies anti-symmetry, multi-linearity
and the generalized Jacobi Identity.
\\
\\
\noindent Let $V$ be an $n$-Lie algebra, $n\geq 3$. We will
associate to $V$ a Lie algebra called \emph{the basic Lie algebra}.
This construction goes as presented below.\\
\\
\noindent Consider $ad:\wedge^{n-1}V\to \textrm{End}(V)$ given by
$ad(a_1\wedge\ldots\wedge a_{n-1})(b)=[a_1,\ldots,a_{n-1},b]$. One
can easily see that we could have chosen the codomain of $ad$ to be
$\textrm{Der}(V)$ (the set of derivations of $V$) instead of
$\textrm{End}(V)$. $ad$ induces a map $\tilde{ad}:\wedge^{n-1}V\to
\textrm{End}(\wedge^\bullet V)$ defined as
$\tilde{ad}(a_1\wedge\ldots\wedge a_{n-1})(b_1\wedge\ldots\wedge
b_m)=\sum_{i=1}^{n-1}b_1\wedge\ldots\wedge[a_1,\ldots,a_{n-1},b_i]\wedge\ldots\wedge
b_m$.\\
\\
\noindent Denote by $\textrm{Inder}(V)$ the set of inner derivations
of $V$, i.e. endomorphisms of the form $ad(a_1,\ldots,a_{n-1})$.\\
\\
\noindent Let $a:=a_1\wedge\ldots\wedge a_{n-1}$ and $b := b_1\wedge
\ldots \wedge b_{n-1} $ be elements of $\wedge^{n-1}V$. Define
$[a,b]= \frac{1}{2}(\tilde{ad}(a) (b)- \tilde{ad}(b)(a))$.
\begin{prop}
Let $V$ be an $n$-Lie algebra such that $\tilde{ad}:\wedge^{n-1}V\to
\textrm{End}(\wedge^\bullet V)$ is skew-symmetric\footnote{
In a previous version of the article this condition was omitted. It was pointed out to us by 
X. Garcia-Martinez, R. Turdibaev, T. van der Linden (see also \cite{Linden}) that this is wrong.
}, then
$[\cdotp,\cdotp]$ defines a Lie algebra structure on $\wedge^{n-1}V$
and $ad:\wedge^{n-1}V\to \textrm{Inder}(V)$ is a surjective Lie
algebra homomorphism.
\end{prop}

\begin{proof}
\noindent The skew-symmetry of the bracket is obvious, and so is the
surjectivity of $ad$, thus we only need to prove that the Jacobi
identity holds. In order to do this, we first show that
$\tilde{ad}(\tilde{ad}(a)(b))= \tilde{ad}(a)\circ
\tilde{ad}(b)-\tilde{ad}(b)\circ \tilde{ad}(a)
=[\tilde{ad}(a),\tilde{ad}(b)]$, and from here it will follow that
$\tilde{ad}$ is a Lie algebra homomorphism, i.e. $\tilde{ad}([
a,b])= \tilde{ad}(a)\circ \tilde{ad}(b)-\tilde{ad}(b)\circ
\tilde{ad}(a) =[\tilde{ad}(a),\tilde{ad}(b)]$. Since both the
left-hand-side and the right-hand-side of the equation are
derivations of the exterior algebra $(\wedge^\bullet V,\wedge)$ (as
previously mentioned) it suffices to show the equality for some
arbitrary $c\in V$.
\begin{align*}
\tilde{ad}(\tilde{ad}&(a)(b))(c)=\tilde{ad}(\tilde{ad}(a_1\wedge\ldots\wedge
a_{n-1})(b_1\wedge\ldots\wedge b_{n-1}))(c)=\\
=&\tilde{ad}(\sum_{i=1}^{n-1}b_1\wedge\ldots\wedge[a_1,\ldots,a_{n-1},b_i]\wedge\ldots\wedge
b_{n-1})(c)=\\
=&\sum_{i=1}^{n-1}[b_1,\ldots,[a_1,\ldots,a_{n-1},b_i],\ldots,
b_{n-1},c]=\\
=&[a_1,\ldots,a_{n-1},[b_1,\ldots,b_{n-1},c]]-[b_1,\ldots,b_{n-1},[a_1,\ldots,a_{n-1},c]]=\\
=&(\tilde{ad}(a)\circ \tilde{ad}(b)-\tilde{ad}(b)\circ
\tilde{ad}(a))(c).
\end{align*}
Hence,
\begin{align*}
\tilde{ad}&([a,b])=\tilde{ad}(\frac{1}{2}(\tilde{ad}(a) (b)-
\tilde{ad}(b)(a)))=\\
=&\frac{1}{2}(\tilde{ad}(\tilde{ad}(a)(b))-\tilde{ad}(\tilde{ad}(b)(a)))=\\
=&(\tilde{ad}(a)\circ \tilde{ad}(b)-\tilde{ad}(b)\circ \tilde{ad}(a))=\\
=&[\tilde{ad}(a),\tilde{ad}(b)].
\end{align*}
\noindent Now the Jacobi identity follows easily and we are done.
\end{proof}

\begin{defi}
A subspace $V'\subseteq V$ is called a subalgebra of the $n$-Lie
algebra $V$ if $[V',\ldots,V']\subseteq V'$. A subalgebra
$I\subseteq V$ of an $n$-Lie algebra $V$ is called an ideal if
$[I,V,\ldots,V]\subseteq I$. An $n$-Lie Algebra $V$ is called simple
if it has no proper ideals besides 0.
\end{defi}
\noindent We will illustrate these two notions by means of the
following example.
\begin{ex}
\noindent Let $V=\{f:\mathbb{R}^n\to \mathbb{R}| f\textrm{ of class
}C^\infty\}$, $n\geq 3$, and define the $n$-ary bracket on $V$ as
the Jacobian.
\[[f_1,\ldots,f_n]=\left|
\begin{array}{ccc}
\frac{\partial f_1}{\partial x_1}&\ldots&\frac{\partial
f_n}{\partial x_1}\\
\vdots&\ddots&\vdots\\
\frac{\partial f_1}{\partial x_n}&\ldots&\frac{\partial
f_n}{\partial x_n}\\
\end{array}
\right|.\] \noindent $V$ together with the operation given by the
Jacobian forms an $n$-Lie algebra. Denote by $V'$ the subset of $V$
containing the polynomial functions in $V$. Then $V'$ together with
the inherited operation forms a subalgebra of the $n$-Lie algebra
$V$. An ideal of this $n$-Lie algebra can be obtained as follows.
Denote by $I$ the set of functions in $V$ which are flat at the
origin. Then $I$ is an ideal of $V$.
\end{ex}

\noindent In his PhD-thesis in 1993 \cite{Ling}, Wuxue Ling
classified all simple $n$-Lie algebras, $n\geq 3$. He arrived at the
following conclusion: for every $n\geq 3$ the only simple $n$-Lie
algebra is isomorphic to $\mathbb{C}^{n+1}$, where the bracket is
defined as follows. Denote by $e_1,\ldots,e_{n+1}$ the canonical
basis of $\mathbb{C}^{n+1}$, then:
\[[e_1,\ldots,\hat{e_i},\ldots,e_{n+1}]=(-1)^{n+i+1}e_i,\] where
$\hat{e_i}$ means that $e_i$ is omitted.
\begin{remark} From now on we will denote a generic $n$-Lie algebra
by $V$ and the simple one by $A$. The vector space $A$ comes
together with an inner product, $\langle e_i,e_j\rangle =\delta_{i,j}$, and the
standard orientation form, $e_1\wedge\ldots\wedge e_{n+1}$. Hence,
we can observe that the simple $n$-Lie Algebra $A$ is just the
vector space $\mathbb{C}^{n+1}$ together with the Hodge star
operation $*:\wedge^{n}\mathbb{C}^{n+1}\to\mathbb{C}^{n+1} $.
Moreover $*:\wedge^2A\to\wedge^{n-1}A$ is an isomorphism of Lie
algebras; $\wedge^{n-1}A\simeq\wedge^2A\simeq so(n+1)$.
\end{remark}

\subsection{n-Lie modules}

In this section we assume that $V$ is an $n$-Lie algebra such that the map  $\tilde{ad}:\wedge^{n-1}V\to
\textrm{End}(\wedge^\bullet V)$ is skew-symmetric.
\begin{defi}
A vector space $M$ is called an $n$-Lie module for the $n$-Lie
algebra $V$, if on the direct sum $V\oplus M$ there is the structure
of an $n$-Lie algebra, such that the following conditions are
satisfied:
\begin{itemize}
\item $V$ is a subalgebra;
\item $M$ is an abelian ideal, i.e. when at least two slots of the
$n$-bracket are occupied by elements in $M$ the result is 0.
\end{itemize}
\end{defi}

\noindent Let $M$ be an $n$-Lie module of $V$. Then, $M$ is a module
for the basic Lie algebra  $\wedge^{n-1}V$, where the action is
given by $x_1\wedge\ldots\wedge x_{n-1}.m=[x_1,\ldots,x_{n-1},m].$
On the direct sum of the two spaces $M$ and $V$ we must have the
structure of an $n$-Lie algebra. This means that the generalized
Jacobi identity has to hold. If we write out this condition, we
obtain a two sided ideal of the universal enveloping algebra of
$\wedge^{n-1}V$, generated by the elements
\[x_{a_1,\ldots,a_{2n-2}}=[a_1,\ldots,a_n]\wedge
a_{n+1}\wedge\ldots\wedge a_{2n-2}-\]\[
-\sum_{i=1}^n(-1)^{i+n}(a_1\wedge\ldots\wedge\hat{a_i}\wedge\ldots\wedge
a_n)(a_i\wedge a_{n+1}\wedge\ldots\wedge a_{2n-2}).\] This two sided
ideal must act trivially on $M$. We will denote this ideal by $Q(V)$
and the universal enveloping algebra of the basic Lie algebra by
$U:=U(\wedge^{n-1}V)$. On the other hand, any module of the basic
Lie algebra, on which $Q(V)$ acts trivially, satisfies the
conditions defining an $n$-Lie module of $V$. Thus, because of this
property, it makes sense to define:
\begin{defi}
The universal enveloping algebra of the $n$-Lie algebra $V$, denoted
by $U(V)$, is defined as $U/Q(V)$.
\end{defi}
\noindent Because of this definition, we obtain the same relation,
between the representations of an $n$-Lie algebra and the
representations of its universal enveloping algebra, as for Lie
algebras. Namely, representations of the $n$-Lie algebra $V$ are in
$1-1$ correspondence with the representations of the universal
enveloping algebra $U(V)$. It was proven in \cite{Dzhumadil'daev}
that an $n$-Lie module $M$ of $V$ is irreducible / completely
reducible if and only if it is irreducible
/ completely reducible as a module of the Lie algebra $\wedge^{n-1} V$.\\
\\
\noindent For Lie algebras, \emph{primitive ideals} are defined to
be two sided ideals of the universal enveloping algebra, which are
annihilators of irreducible representations. It makes sense to
define them the same in the case of $n$-Lie algebras.
\begin{defi} Let $V$ be an $n$-Lie algebra and $U(V)$ its universal
enveloping algebra. A two-sided ideal of $U(V)$ is called primitive
if it is the annihilator of some irreducible module of $V$.
\end{defi}
\noindent Let $M$ be an irreducible module of the $n$-Lie algebra
$V$. The annihilator of this module, $\textrm{Ann}(M)$, is a
primitive ideal of $U(V)$, by definition. Since $U(V)=U/Q(V)$, we
may conclude that $\textrm{Ann}(M)$ is a primitive ideal of $U$
which includes $Q(V)$. On the other hand, let $I$ be a primitive
ideal of $U$ which includes $Q(V)$. Then, there exist an irreducible
module of $U$, call it $M$, which is annihilated by $I$, and thus by
$Q(V)$. This transforms $M$ into a module of $U(V)$ and $I$ into a
primitive ideal of $U(V)$. Hence, we may conclude that primitive
ideals of $U(V)$ are in $1-1$ correspondence with
the primitive ideals of $U$ which contain $Q(V)$.  \\
\\
\noindent Primitive ideals of $U$ are, by definition, annihilators
of the irreducible modules. It was proven in \cite{Duflo}, by M.
Duflo, that one does not need to look at all irreducible modules.
Namely, it suffices to look only at the highest weight, irreducible
ones and these will supply all the primitive ideals. Irreducible
modules of an $n$-Lie algebra are in particular irreducible modules
of the associated basic Lie algebra. We define highest weight
modules of the $n$-Lie algebra similarly. A module of the $n$-Lie
algebra $V$ is called a highest weight module, if it is a highest
weight module of the basic Lie algebra $\wedge^{n-1}V$. This means
that our problem of determining the primitive ideals of $U(A)$, can
be reformulated as determining the highest weight, irreducible
representations of $A$.

\subsection{The  simple n-Lie algebra A}

\noindent Recall that we denoted by $A$ the simple,
$n+1$-dimensional $n$-Lie algebra.
\begin{remark}
Note that the simple $n$-Lie algebra $A$ automatically satisfies the condition that the map  $\tilde{ad}:\wedge^{n-1}A\to
\textrm{End}(\wedge^\bullet A)$ is skew-symmetric.
\end{remark}
\noindent
The ideal $Q(A)$ is generated by the elements
\[x_{a_1,\ldots,a_{2n-2}}=[a_1,\ldots,a_n]\wedge
a_{n+1}\wedge\ldots\wedge a_{2n-2}-\]\[
-\sum_{i=1}^n(-1)^{i+n}(a_1\wedge\ldots\wedge\hat{a_i}\wedge\ldots\wedge
a_n)(a_i\wedge a_{n+1}\wedge\ldots\wedge a_{2n-2}).\] For the simple
$n$-Lie algebra $A$ we can compute these generators further. Instead
of the elements $a_i\in A$ we plug in elements of the standard basis
of $A$ and use the isomorphism of Lie algebras which sends the
element
$e_1\wedge\ldots\wedge\hat{e_i}\wedge\ldots\wedge\hat{e_j}\wedge\ldots\wedge
e_{n+1}\in \wedge^{n-1}A$ to
\begin{equation}
\label{j2} e^{ij}:=e_i\wedge e_j= \frac{e_i\otimes e_j-e_j\otimes
e_i}{2}=E_{ij}-E_{ji}\in so(n+1),\quad 1\leq i<j\leq n+1.
\end{equation}
We obtain the following relations:
\begin{displaymath}
x_{i,k,l,m} = \left\{
\begin{array}{ll}
    e^{ik}e^{lm}-e^{il}e^{km}+e^{im}e^{kl} & \textrm{if }i,k,l,m\textrm{ are all distinct,} \\
    0 & \textrm{otherwise,}
\end{array}
\right.
\end{displaymath}
where $i<k<l<m\in\{1,\ldots,n+1\}$. Here, $x_{i,k,l,m}$ is just a
short-hand notation of the generator denoted before by
$x_{a_1,\ldots,a_{2n-2}}$. For the detailed computation, we refer
the reader to \cite{Dzhumadil'daev}.
\begin{remark} We will show later on, that the condition on the indices can be
dropped. Although it is not necessary, we will still assume them to
be ordered.
\end{remark}
\noindent One can easily see that these relations live in $S^2(so(n+1)).\footnote{This will play an important role in the near future.}$\\
\\
\noindent Denote by $R:=\textrm{Span}\{x_{j,k,l,m}\}_{1\leq j,k,l,m
\leq n+1 }$ and recall that $\wedge^{n-1}A\simeq\wedge^2A$.

\begin{lema} $R$ is a finite dimensional $\wedge^{2}A$-module.
\end{lema}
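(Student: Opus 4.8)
The plan is the following. Finite-dimensionality is free: by construction $R$ is a linear subspace of $S^{2}(so(n+1))$, and $so(n+1)$ is finite-dimensional, so nothing is needed there. The real assertion is that $R$ is closed under the $\wedge^{2}A\simeq so(n+1)$-action, which on $U=U(so(n+1))$ --- and hence on its subspace $S^{2}(so(n+1))$ --- is just the adjoint action $\textrm{ad}(x)(u)=xu-ux$. Since $\textrm{ad}(x)$ is a derivation of the associative product and $so(n+1)$ is spanned by the $e^{ab}$, it suffices to prove that $\textrm{ad}(e^{ab})(x_{i,k,l,m})\in R$ for all basis vectors $e^{ab}$ and all generators $x_{i,k,l,m}$.

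First I would get rid of the ordering assumption on $i,k,l,m$. Using only $e^{pq}=-e^{qp}$ and the fact that $e^{pq}$ and $e^{rs}$ commute in $U$ whenever $\{p,q\}\cap\{r,s\}=\emptyset$, a short direct check shows that the defining expression $e^{ik}e^{lm}-e^{il}e^{km}+e^{im}e^{kl}$ is totally antisymmetric in the four indices and vanishes as soon as two of them coincide. This lets me treat $x_{i,k,l,m}$ as defined (up to sign) for arbitrary indices, always an element of $R$; it also disposes of the Remark preceding the Lemma.

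Then I would record the $so(n+1)$ structure constants $[e^{ij},e^{kl}]=\delta_{jk}e^{il}+\delta_{il}e^{jk}-\delta_{jl}e^{ik}-\delta_{ik}e^{jl}$, note that $[e^{ab},e^{cd}]=0$ when $\{a,b\}$ and $\{c,d\}$ are disjoint and is $\pm$ a single basis element $e^{rs}$ otherwise, expand $\textrm{ad}(e^{ab})(x_{i,k,l,m})$ by the Leibniz rule, and split into cases according to $p:=\#(\{a,b\}\cap\{i,k,l,m\})$. For $p=0$ every bracket dies and the answer is $0$. For $p=2$, total antisymmetry reduces us to, say, $\{a,b\}=\{i,k\}$, and after expanding, the terms cancel in pairs (using commutativity of the product in $S^{2}$), giving $0$. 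For $p=1$, reduce to $a=i$, $b\notin\{i,k,l,m\}$; three of the six brackets vanish and the other three each contribute a single term, yielding $\textrm{ad}(e^{ib})(x_{i,k,l,m})=-x_{b,k,l,m}\in R$. In all cases the result lies in $R$, so $R$ is an $so(n+1)$-submodule of $S^{2}(so(n+1))$, and being a subspace of the latter it is finite-dimensional.

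The only difficulty is bookkeeping --- tracking signs through the Leibniz expansion in the $p=1$ and $p=2$ cases --- and the total antisymmetry established in the first step is exactly what keeps this under control, collapsing the many index configurations to one representative each. A slicker but less self-contained route would be to observe that the natural $GL(n+1)$-equivariant map $\wedge^{4}\mathbb{C}^{n+1}\to S^{2}(\wedge^{2}\mathbb{C}^{n+1})$, $v_{1}\wedge v_{2}\wedge v_{3}\wedge v_{4}\mapsto (v_{1}\wedge v_{2})(v_{3}\wedge v_{4})-(v_{1}\wedge v_{3})(v_{2}\wedge v_{4})+(v_{1}\wedge v_{4})(v_{2}\wedge v_{3})$, composed with $\wedge^{2}\mathbb{C}^{n+1}\simeq so(n+1)$, identifies $R$ with its image, an $so(n+1)$-submodule of dimension at most $\binom{n+1}{4}$.
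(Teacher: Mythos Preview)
Your proposal is correct and is precisely a fleshed-out version of what the paper records only as ``An easy computation.'' The case analysis on $p=\#(\{a,b\}\cap\{i,k,l,m\})$ is the natural way to carry it out, and your preliminary observation on total antisymmetry of $x_{i,k,l,m}$ is exactly what the paper later proves in Section~4.3 via the formula $x_{i_1,i_2,i_3,i_4}=\tfrac{1}{8}\sum_{\sigma\in S_4}\mathrm{sgn}(\sigma)\,e^{i_{\sigma(1)},i_{\sigma(2)}}e^{i_{\sigma(3)},i_{\sigma(4)}}$.

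One small point of phrasing: in the $p=2$ case your appeal to ``commutativity of the product in $S^{2}$'' is cleanest if you explicitly work in $S^{2}(so(n+1))$ with the symmetric product $\odot$ and the derivation action $x\cdot(a\odot b)=[x,a]\odot b+a\odot[x,b]$; then the four surviving terms cancel in pairs because $a\odot b=b\odot a$. If instead you stay in $U$ with the associative product, the terms do not cancel by commutativity alone---degree-one commutators appear---but those commutators then cancel against each other, giving the same answer $0$. Either reading is fine, just be clear which one you mean.

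Finally, the ``slicker but less self-contained route'' you sketch at the end---identifying $R$ with the image of the equivariant map $\wedge^{4}A\to S^{2}(\wedge^{2}A)$---is exactly what the paper develops in the two lemmas immediately following this one (the map is called $\psi$ there). So both approaches you outline are present in the paper: the first as the unproved ``easy computation,'' the second as the content of the next lemma.
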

\begin{proof} An easy computation.
\end{proof}
\begin{lema}
The left-sided ideal generated by $R$ equals to the right-sided
ideal generated by $R$ and equals $Q(A)$.
\end{lema}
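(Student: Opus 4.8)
The plan is to reduce the statement to the single fact, supplied by the preceding lemma, that $R$ is stable under the adjoint action of $\wedge^2A\simeq so(n+1)$ inside $U$; concretely, that $[x,r]:=xr-rx\in R$ for every $x\in so(n+1)$ and $r\in R$. As a preliminary I would record that the linear span of all the generators $x_{a_1,\dots,a_{2n-2}}$ of $Q(A)$, taken over arbitrary $a_i\in A$, is exactly $R$: by multilinearity this span is generated by the values on tuples of standard basis vectors, and by the explicit computation displayed above each such value is either $0$ or $\pm x_{i,k,l,m}$ for some $i<k<l<m$. Hence $Q(A)$ is, by definition, the smallest two-sided ideal of $U$ containing $R$, and it suffices to show that the left ideal $UR$ and the right ideal $RU$ both coincide with this two-sided ideal.

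The heart of the argument is the equality $RU=UR$. Since $U=U(so(n+1))$ is generated as an associative algebra by $so(n+1)$, it is enough to check that $r\,x_1x_2\cdots x_m\in UR$ for all $r\in R$ and $x_1,\dots,x_m\in so(n+1)$, which I would do by induction on $m$. The case $m=0$ is trivial. For the inductive step, rewrite
\[
r\,x_1x_2\cdots x_m=(x_1r-[x_1,r])\,x_2\cdots x_m=x_1\bigl(r\,x_2\cdots x_m\bigr)-[x_1,r]\,x_2\cdots x_m .
\]
The first summand lies in $U(UR)=UR$ by the induction hypothesis; for the second, $\mathrm{ad}$-stability gives $[x_1,r]\in R$, so the induction hypothesis applied to $[x_1,r]\,x_2\cdots x_m$ places it in $UR$ as well. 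This proves $RU\subseteq UR$, and the symmetric computation (commuting the rightmost letter past $r$ via $x_mr=rx_m+[x_m,r]$) proves $UR\subseteq RU$, so $RU=UR$.

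It then remains to identify this common subspace with $Q(A)$. The set $UR=RU$ is a left ideal by construction and a right ideal because $(UR)U=U(RU)=U(UR)=UR$; it contains $R=1\cdot R$, and any two-sided ideal containing $R$ automatically contains $UR$, so $UR=RU$ is the smallest two-sided ideal containing $R$, that is, $UR=RU=Q(A)$ by the first paragraph. Since the left ideal generated by $R$ is $UR$ and the right ideal generated by $R$ is $RU$, the lemma follows.

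The only point requiring real care is the appeal to the preceding lemma: one must be sure that the $\wedge^2A$-module structure on $R$ used there is genuinely the restriction to $R$ of the adjoint action of $so(n+1)$ on $U$ (equivalently on $S^2(so(n+1))$), for otherwise $[x_1,r]\in R$ is not legitimate. This is precisely what the ``easy computation'' in that proof establishes --- one expands $[e^{ab},x_{i,k,l,m}]$ inside $U$ and recognizes the outcome as a linear combination of elements $x_{j,k,l,m}$. Granted that, the remainder is the standard fact that an $\mathrm{ad}$-stable subspace of $U(\mathfrak g)$ generates a two-sided ideal on either side, and is purely formal.
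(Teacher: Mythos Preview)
Your argument is correct and is precisely the intended one. The paper does not actually supply a proof of this lemma: it is stated immediately after the lemma asserting that $R$ is a $\wedge^{2}A$-module, and the authors simply use the conclusion $Q(A)=URU=UR=RU$ later (in the proof of the lemma characterising when $A$ acts on $Z(\lambda)$). Your write-up fills in the standard reasoning they omit --- ad-stability of $R$ forces $UR=RU$, whence this common space is the two-sided ideal generated by $R$, which is $Q(A)$ by definition --- and your caveat about checking that the module structure on $R$ really is the adjoint action inside $U$ is exactly the content of the preceding ``easy computation''. Nothing further is needed.
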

\noindent Consider the $\wedge^2A$-module $\wedge^4A$ and the map
$\psi:\wedge^4A\to S^2(\wedge^2A)$ defined on monomials
as:\[v_i\wedge v_j\wedge v_k\wedge v_l\mapsto (v_i\wedge
v_j)\odot(v_k\wedge v_l)-(v_i\wedge v_k)\odot(v_j\wedge
v_l)+(v_i\wedge v_l)\odot(v_j\wedge v_k).\footnote{The reader should
observe that these are exactly the generators of $Q(A)$, the only
difference being the notation.}\]
\begin{lema} Let $\tilde{\psi}:\wedge^4A\to R$ be the restriction of
the map $\psi$ to $R\subset S^2(\wedge^2A)$. Then, $\tilde{\psi}$ is
an isomorphism of Lie modules.
\end{lema}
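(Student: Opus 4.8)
The plan is to check, in order, that $\psi$ is well defined on $\wedge^4A$, that it is a morphism of $\wedge^2A$-modules, that its image is exactly $R$, and that it is injective; the last point carries the only real content and reduces to an elementary observation inside $S^2(so(n+1))$.

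To see that $\psi$ descends to $\wedge^4A$ and is equivariant, I would first record an invariant description of it. The exterior algebra $\wedge^\bullet A$ is a graded Hopf algebra, and the $(2,2)$-component of its coproduct is a $GL(A)$-equivariant map $\Delta_{2,2}\colon\wedge^4A\to\wedge^2A\otimes\wedge^2A$; composing with the symmetrization $\wedge^2A\otimes\wedge^2A\to S^2(\wedge^2A)$ produces, on monomials, a nonzero scalar multiple of the stated formula for $\psi$. Hence $\psi$ is a well-defined linear map $\wedge^4A\to S^2(\wedge^2A)$ (alternatively, one checks directly that the right-hand side is multilinear and alternating under each transposition of the four vectors). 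Moreover the $\wedge^2A\simeq so(n+1)$-action on $\wedge^4A$ is the restriction to $so(n+1)$ of the natural $\mathfrak{gl}(A)$-action, and the action on $S^2(\wedge^2A)\subset U(\wedge^2A)$ is the one induced by $\mathrm{ad}$ on $\wedge^2A$, i.e.\ the adjoint action of $U(\wedge^2A)$ restricted to its degree-two part; since both $\Delta_{2,2}$ and the symmetrization are equivariant for these actions, $\psi$ is a homomorphism of $so(n+1)$-modules.

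Next I would identify the image of $\psi$. Evaluating on a basis monomial $e_i\wedge e_k\wedge e_l\wedge e_m$ with $i<k<l<m$, the four indices are pairwise distinct, so the root vectors $e^{ik},e^{lm}$ (and likewise $e^{il},e^{km}$ and $e^{im},e^{kl}$) commute in $so(n+1)$; therefore their symmetric products in $S^2(so(n+1))$ agree with the corresponding products in $U(so(n+1))$, and $\psi(e_i\wedge e_k\wedge e_l\wedge e_m)=e^{ik}e^{lm}-e^{il}e^{km}+e^{im}e^{kl}=x_{i,k,l,m}$. Thus $\psi$ carries the standard basis of $\wedge^4A$ onto the spanning family $\{x_{i,k,l,m}\}$ of $R$; in particular $\psi$ takes values in $R$, so it corestricts to a surjective morphism $\tilde\psi\colon\wedge^4A\to R$ of $\wedge^2A$-modules.

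It remains to prove that $\tilde\psi$ is injective, for which it suffices to show that the elements $x_{i,k,l,m}$ with $i<k<l<m$ form a linearly independent set, so that $\tilde\psi$ maps a basis of $\wedge^4A$ to a linearly independent set. Fix the standard monomial basis $\{e^{ab}\odot e^{cd}\}$ of $S^2(so(n+1))$. Each $x_{i,k,l,m}$ is the $\pm1$-combination $e^{ik}\odot e^{lm}-e^{il}\odot e^{km}+e^{im}\odot e^{kl}$ of the three basis vectors attached to the three ways of splitting $\{i,k,l,m\}$ into two pairs. A basis vector $e^{ab}\odot e^{cd}$ with $\{a,b\}\cap\{c,d\}=\emptyset$ determines the $4$-element set $\{a,b,c,d\}$, so the triples of basis vectors occurring in $x_{i,k,l,m}$ for different $4$-subsets of $\{1,\ldots,n+1\}$ are pairwise disjoint; hence the $x_{i,k,l,m}$ lie in mutually complementary coordinate subspaces and are linearly independent. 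Consequently $\dim R=\binom{n+1}{4}=\dim\wedge^4A$ and $\tilde\psi$ is a bijective morphism of $\wedge^2A$-modules, i.e.\ an isomorphism of Lie modules. The only delicate point is the identification of symmetric products with enveloping-algebra products used above, which is precisely the commutativity of root vectors indexed by disjoint pairs; everything else is routine verification.
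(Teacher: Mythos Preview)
Your argument is correct. The paper does not write out a formal proof of this lemma, but its implicit argument differs from yours in the injectivity step: immediately after the lemma the paper introduces the map $\phi:S^2(\wedge^2A)\to\wedge^4A$, $(v_i\wedge v_j)\odot(v_k\wedge v_l)\mapsto v_i\wedge v_j\wedge v_k\wedge v_l$, and observes that $\phi\circ\psi=3\,\mathrm{Id}$, which gives injectivity in one line and simultaneously yields the splitting $S^2(\wedge^2A)=R\oplus\mathrm{Ker}\,\phi$ used later. Your route---showing directly that the $x_{i,k,l,m}$ have pairwise disjoint supports in the monomial basis of $S^2(so(n+1))$---is equally valid and more self-contained, but it does not automatically produce the complementary submodule $\mathrm{Ker}\,\phi$; for that you would still need to introduce $\phi$ separately. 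Your treatment of well-definedness and equivariance via the Hopf coproduct is more structural than what the paper indicates, and your remark that the symmetric products $e^{ab}\odot e^{cd}$ agree with the $U$-products precisely because the factors commute is exactly the point underlying the paper's assertion that the relations live in $S^2(so(n+1))$.
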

\noindent Of course, we can also define the map
$\phi:S^2(\wedge^2A)\to \wedge^4A$ defined on monomials as:
\[(v_i\wedge v_j)\odot(v_k\wedge v_l)\mapsto v_i\wedge v_j\wedge v_k\wedge v_l.\]
\noindent Observe that $\phi\circ\psi=3Id$. Then, $\textrm{Ker}\phi$
is a subrepresentation of $\wedge^2A$ in $S^2(\wedge^2A)$,
complementary to $R$.\footnote{A rigorous proof of this can be found
later on. } Hence, we have obtained the following decomposition into
submodules:
\[S^2(\wedge^2A)=\psi(\wedge^4A)\oplus \textrm{Ker}\phi.\]
\noindent On the other hand, we already know that
\[U(so(n+1))=\bigoplus_k S^k(\wedge^2A).\] Thus, the universal enveloping algebra of the simple
$n$-Lie algebra $A$ is $U=\bigoplus_k
S^k(\wedge^2A)/Q(A)$.\footnote{This fact will supply the proof to a
future statement about the PBW-basis of $U$.}

\section{Finding irreducible highest weight representation of A}

\noindent Our main concern in this section will be to find
irreducible, highest weight representations of the simple $n$-Lie
algebra. Once this is done, it will be easy, by looking at the
highest weight, to figure out which of these representations are
finite dimensional and which are not.

\subsection{A graphical interpretation of the generators of Q(A)}

\noindent In order to understand the relations which generate $Q(A)$
better, we want to view them as graphical diagrams. On the basis
elements of $so(n+1)$ we define the lexicographical order, namely
$e^{i_1j_1}\leq e^{i_2j_2}\iff i_1<i_2,\textrm{ or }i_1=i_2\textrm{
and }j_1\leq j_2$. In this case, we say that
$(i_1,j_1)\leq(i_2,j_2)$. We always assume that $i<j$. If this is
not the case, we can interchange them by the following rule:
$e^{ij}=-e^{ji}$. Taking into account that two basis elements of
$so(n+1)$, which are not in lexicographical order, can be reordered,
using the Lie bracket, at the expense of some term of degree one
less, it becomes easy to give a PBW-basis of $U$, the universal
enveloping algebra of the basic Lie algebra $\wedge^{n-1}A\simeq
so(n+1)$. Let $U_k=\{e\in U|e=e^{i_1j_1}e^{i_2j_2}\ldots e^{i_kj_k},
\textrm{ where }(i_1,j_1)\leq(i_2,j_2)\leq\ldots\leq(i_k,j_k)\}$,
i.e. all simple elements of degree $k$. Then a
Poincare-Birkhoff-Witt basis is given by $\bigcup_k U_k$.\\
\\
\noindent For any simple element of degree 1, i.e. some $e^{ij}$, we
can represent it graphically as $n+1$ ordered points with an
oriented arc going from the $i$'th point to the $j$'th. (Recall that both $i$ and $j$ range from 1 to $n+1$.)\\
\includegraphics[width=0.15\textwidth]{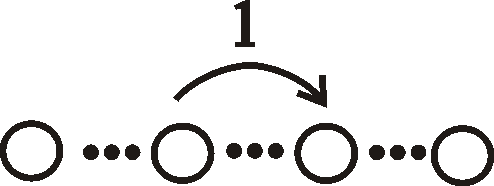}\\

\noindent Changing the orientation of this arc is the same as
changing the order of $i$ and $j$, thus it results in a minus sign
in front of the diagram. Some arbitrary product in $U$ can be
represented similarly as $n+1$ ordered points with arcs connecting
them. Multiple arcs between the same two points are allowed, each of
this arcs having its own number above it. This number stands for the
place the basis element occupies in the product. Multiplication of
such elements can be translated, in this graphical language, as the
overlapping of such diagrams, where the numbers above the arcs in
the second diagram have to be shifted by a number
equal to the number of arcs in the first diagram.\\
\\
\noindent In $U$ the following commutation relations hold. These are
the relations mentioned in the beginning of this section,
represented now graphically. \\

\noindent \includegraphics[width=0.3\textwidth]{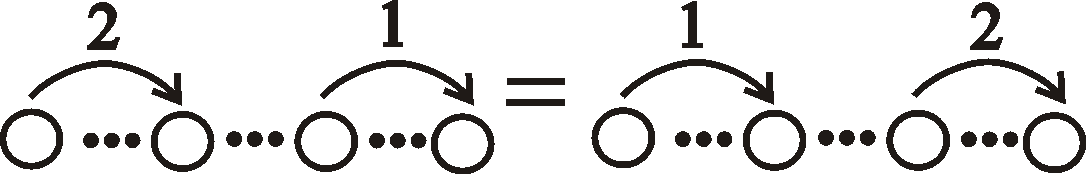}\\
\includegraphics[width=0.3\textwidth]{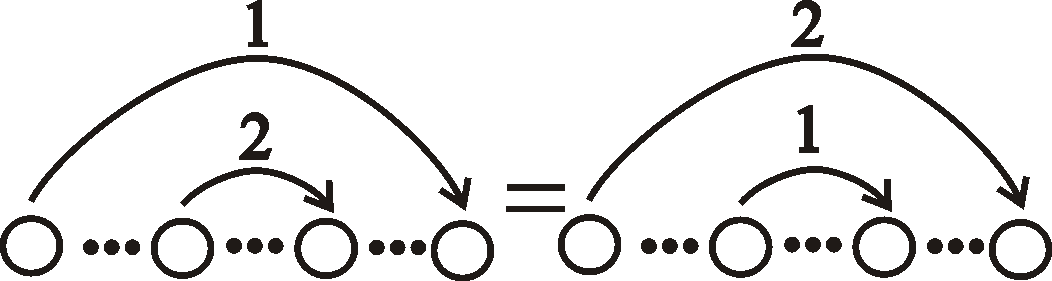}\\
\includegraphics[width=0.35\textwidth]{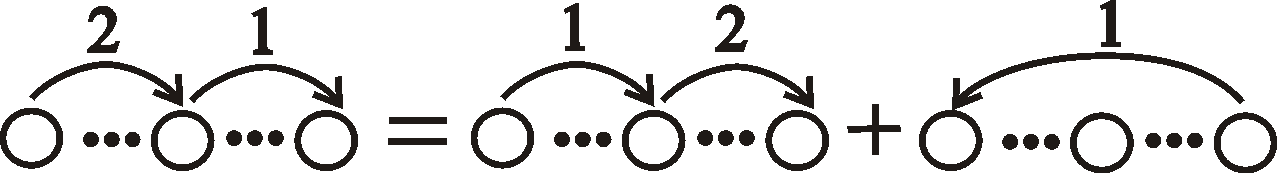}\\
\includegraphics[width=0.35\textwidth]{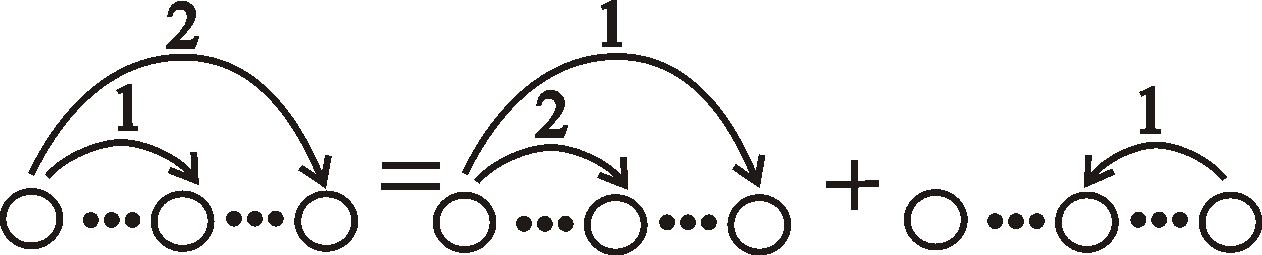}\\
\includegraphics[width=0.35\textwidth]{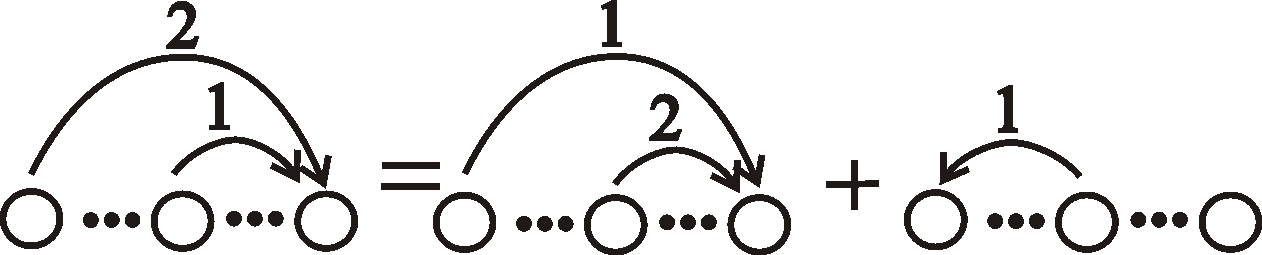}\\

\noindent In view of these relations, the numbers above the arcs can
be dropped and the order of the arrows can be assumed to be the
lexicographical order.\\
\\
\noindent Now we can give a graphical interpretation to the
generators $x_{i,j,k,l}$ of $Q(A)$. They tell us that we can resolve
intersections in any diagram:\\

\includegraphics[width=0.47\textwidth]{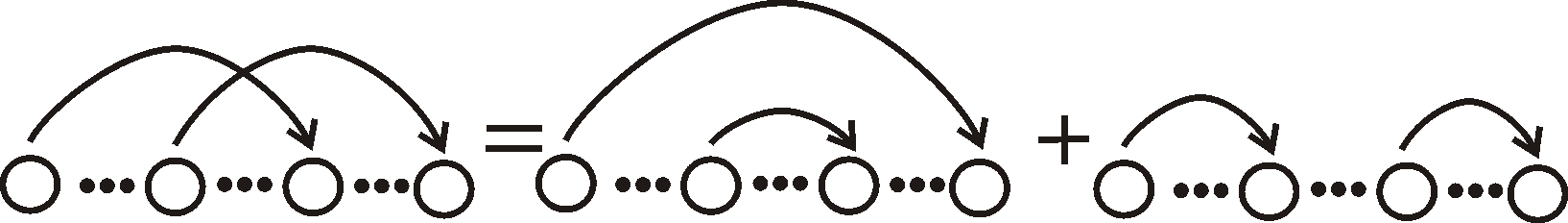}.\\

\noindent From here we can conclude that a PBW-basis for the
universal enveloping algebra of $A$ can be obtained from the
PBW-basis of $U(\wedge^{2}A)$ by dropping those elements
which in their graphical representation have intersecting arcs.\\
\\
\noindent Although this graphical interpretation does not supply
much insight just yet, it will become very useful further on.

\subsection{First steps towards a solution}

\noindent The main algebraic object we will work with is the simple
Lie algebra $so(n+1)$ (respectively the semi-simple one, in the case
of $so(4)$). Its universal enveloping algebra will be denoted as
before by $U$. Let $H$ be a Cartan subalgebra of $so(n+1)$ and
$\Phi$ the corresponding root system. We denote by $\Phi^+$ the set
of positive roots of $\Phi$ and by $\Phi^-$ the set of negative
ones. Fix $\lambda$ in $H^*$. Consider the left ideal of $U$,
$I(\lambda)$, generated by all $x_{\alpha},\textrm{ with }\alpha\in
\Phi^+$ and all $h-\lambda(h)1$, where $h\in H$. Then
$V(\lambda):=U/I(\lambda)$ is a highest weight module of $so(n+1)$
with highest weight $\lambda$, called the Verma module of weight
$\lambda$. $V(\lambda)$ might not be irreducible but it has a unique
irreducible quotient. Define $Z(\lambda)$ to be $U/J(\lambda)$,
where $J(\lambda)$ is the unique maximal left ideal of $U$
containing the left ideal $I(\lambda)$. Then $Z(\lambda)$ is an
irreducible highest weight module with highest weight $\lambda$. Our
goal is to determine for which $\lambda\in H^*$, $Z(\lambda)$ is an
irreducible module of the $n$-Lie algebra $A$, i.e. for which $\lambda\in H^*$, the two-sided ideal $Q(A)$ acts trivially on $Z(\lambda)$. \\
\\
\noindent Before we do this, we need to show that our results will
be independent of the choice of the Cartan subalgebra, i.e. for any
two Cartan subalgebras $H$ and $H'$ of $so(n+1)$, the sets of
weights corresponding to these subalgebras, such that $Z(\lambda)$
is an irreducible representation of the $n$-Lie algebra $A$,
are related by an isomorphism.\\
\\
\noindent Let $H$ and $H'$ be two Cartan subalgebras of $so(n+1)$,
$\lambda\in H^*$, $\lambda'\in (H')^*$ and $I(H,\lambda)$,
$I(H',\lambda')$ the two left ideals corresponding to each of these
Cartan subalgebras (the notation now keeps track of the Cartan
subalgebra as well). Denote by $J(H,\lambda)$, $J(H',\lambda')$ the
maximal left ideals of $U$ which include the two ideals above. As
will be clear later on, the problem of $Z(\lambda)$, respectively
$Z(\lambda')$, being an irreducible $A$-module translates as
$Q(A)\subseteq J(H,\lambda)$, respectively $Q(A)\subseteq
J(H',\lambda')$.\\
\\
\noindent Let \[\Lambda:=\{\alpha\in H^*|Q(A)\subseteq
J(H,\alpha)\},\]
\[\Lambda':=\{\alpha'\in (H')^*|Q(A)\subseteq J(H',\alpha')\}\]
and $\varphi:so(n+1)\to so(n+1)$ be a Lie algebra isomorphism for
which $\varphi(H)=H'$. Then $\varphi$ induces an
automorphism $\tilde{\varphi}:U\to U$.\\
\\
\noindent We want the following equality to
hold:\[(\varphi_{|H})^*(\Lambda')=\Lambda,\] which is the same thing
as \[Q(A)\subseteq J(H,\lambda)\textrm{ if and only if }
Q(A)\subseteq J(H',\lambda'),\]  for
$\lambda'=(\varphi_{|H})^*(\lambda)$. Since $\varphi$ is an
isomorphism, it is enough to show just one implication of the above
equivalence. Assume that $Q(A)\subseteq J(H,\lambda)$. Then,
\[Q(A)\subseteq J(H,\lambda)\Rightarrow
\tilde{\varphi}(Q(A))\subseteq\tilde{\varphi}(J(H,\lambda))=J(H',\lambda').\]
Hence, if we can show that $\tilde{\varphi}(Q(A))=Q(A)$ we are
done.\\
\\
\noindent We will prove this by finding a canonical description for
the two sided ideal $Q(A)$. It will then follow that any
automorphism of $U$, coming from a Lie algebra isomorphism, will map
$Q(A)$ into itself.

\subsubsection{The canonical description of Q(A)}

\noindent As explained above, finding a canonical description for
$R$, the $\textrm{Span}$ of the relations, is a crucial step in
proving that our strategy is independent of the Cartan subalgebra we
are working with. For this purpose, we will explain how $R$ appears
as an eigenspace of the Casimir acting on
$so(n+1)\otimes so(n+1)$.\\
\\
\noindent Denote by $c$ the universal Casimir element, given by the
formula\[c=-\frac{1}{2n}\sum_{i<j}(e^{ij})^2.\]

\noindent Define $\bar{c}=\frac{n}{2}c-n\cdot Id$. Our goal is to
show that $R=\textrm{Eigenspace}_{\bar{c}}(-2)$, i.e. the eigenspace
of $\bar{c}$ with
eigenvalue -2.\\
\\
\noindent Note that
\[so(n+1)\otimes so(n+1)=\wedge^2 so(n+1)\oplus
S^2(so(n+1)),\] where \[x\otimes y=\frac{x\otimes y-y\otimes
x}{2}+\frac{x\otimes y+y\otimes x}{2}.\] We denote the first term of
the sum on the right-hand-side by $x\wedge y$ and the second one by
$x\odot y$. It is easy to see that $\forall r\in R: \bar{c}\cdot
r=-2r$. This shows that $R\subseteq
\textrm{Eigenspace}_{\bar{c}}(-2)$. So, it remains to show that the
converse inclusion also holds. We will prove this by explicitly
exhibiting a basis of $S^2(so(n+1))$ composed of eigenvectors of
$\bar{c}$.\\
 \\
 \noindent To do this we will use the following formulas: \[
 \bar{c}(e^{ab}\odot e^{cd})=
 \left \{
 \begin{array}{ll}
 -\frac{1}{2}\sum_{i=1}^{n+1}(e^{ai}\odot e^{ai}+e^{bi}\odot
 e^{bi})+e^{ab}\odot e^{ab} & a=c, b=d,\\
-\frac{1}{2}\sum_{i=1}^{n+1}e^{ib}\odot e^{id}+e^{ab}\odot e^{ad} & a=c, b\neq d,\\
-\frac{1}{2}\sum_{i=1}^{n+1}e^{ai}\odot e^{ci}+e^{ab}\odot e^{cb} &
a\neq c, b= d,\\
-\frac{1}{2}\sum_{i=1}^{n+1}e^{ai}\odot e^{id}+e^{ab}\odot e^{bd} &
b=c,\\
-\frac{1}{2}\sum_{i=1}^{n+1}e^{ib}\odot e^{ci}+e^{ab}\odot e^{ca} &
a=d,\\
-e^{ad}\odot e^{bc}+e^{bd}\odot e^{ac} &\text{otherwise,}
 \end{array}
 \right .
 \]
where $a<b$ and $c<d$.\\
\\
\noindent Denote by \[B:=<e^{ab}\odot e^{cd}|
a<b,c<d\in\{1,\ldots,n+1\};|\{a,b,c,d\}|=4>\]
\[C:=<e^{ab}\odot e^{cd}|a<b,c<d\in\{1,\ldots,n+1\};|\{a,b,c,d\}|=3>\] and by \[D:=<e^{ab}\odot e^{ab}|1\leq a<b\leq n+1>.\]
Then, $S^2(so(n+1))=B\oplus C\oplus D$ and
$\textrm{dim}(S^2(so(n+1)))=\textrm{dim}(B)+\textrm{dim}( C)+
\textrm{dim}(D)=3\binom{n+1}{4}+3\binom{n+1}{3}+\binom{n+1}{2}$. We will find a basis of eigenvectors for each of the three subspaces $B$, $C$ and $D$.\\
\\
\noindent For fixed $a,b,c,d\in \{1,\ldots,n+1\}$, all distinct,
denote by \[V_{abcd}:=<e^{ab}\odot e^{cd},e^{ad}\odot
e^{bc},e^{bd}\odot e^{ac}>,\] the 3-dimensional subspace of $B$
generated by the three vectors. Then, a basis of eigenvectors for
$V_{abcd}$ is given by the 3 vectors \[e^{ab}\odot
e^{cd}+e^{ad}\odot e^{bc}-e^{ac}\odot e^{bd} \in
\textrm{Eigenspace}_{\bar{c}}(-2),\]
\[
e^{ab}\odot e^{cd}+e^{ac}\odot e^{bd}\in
\textrm{Eigenspace}_{\bar{c}}(1),\]
\[e^{ad}\odot e^{bc}+e^{ac}\odot e^{bd} \in
\textrm{Eigenspace}_{\bar{c}}(1).\] Since \[B=\bigoplus_{1\leq
a<b<c<d\leq n+1}V_{abcd},\] we obtain a basis of eigenvectors of
$\bar{c}$ acting on $B$. The reader is urged to note that the
eigenvectors corresponding to the eigenvalue -2 are the relations
$x_{a,b,c,d}$. \\
\\
\noindent For the subspace $C$ the procedure is similar. For
$a<c\in\{1,\ldots,n+1\}$ denote by \[V_{ac}=<e^{ai}\odot
e^{ci}|i\in\{1,\ldots,n+1\}-\{a,c\}>.\] Then \[C=\bigoplus_{1\leq
a<c\leq n+1}V_{ac}.\] We regard $V_{ac}$ as the subspace of $C$
spanned by $n-1$ vectors: $v_k:=e^{ak}\odot e^{ck}, $ for
$k\in\{1,\ldots,n+1\}-\{a,c\}$; and consider $n-2$ eigenvectors of
$\bar{c}$ given by $v_1-v_2, \ldots,v_n-v_{n+1}$. These will be
linearly independent eigenvectors of $\bar{c}$ with eigenvalue 1. To
obtain a basis of $V_{ac}$ we still need one more eigenvector,
namely
\[\sum_{i=1}^{n+1}e^{ai}\odot e^{ci}\in \textrm{Eigenspace}_{\bar{c}}(-\frac{n-1}{2}).\]
By taking the union of these bases, we obtain a basis of $C$ given
by eigenvectors, none of which belongs to the eigenspace of
$\bar{c}$ with eigenvalue -2, for generic $n$. By this we mean that
for $n\neq 5$ none of the eigenvalues obtained so far will be equal
to -2. On the other hand if $n=5$, then
$-\frac{n-1}{2}=-2$. Hence, this case has to be treated separately.\\
\\
\noindent The last subspace we need to take into account is the
subspace $D$:\[D=<e^{ij}\odot e^{ij}|1\leq i<j\leq n+1>.\] We use
the following notation \[X_{a,b}:=e^{ab}\odot e^{ab},\]
\[S_a=\sum_{i=1}^{n+1}e^{ai}\odot e^{ai}.\] With this notation, a
basis of eigenvectors is given by:
\[X_{a,b}-X_{a+1,b}-X_{a,b+1}+X_{a+1,b+1}\in \textrm{Eigenspace}_{\bar{c}}(1),\textrm{ for }1\leq a<b-1\leq n,\]
\[S_a-S_{a+1}\in \textrm{Eigenspace}_{\bar{c}}(-\frac{n-1}{2}),\textrm{ for }1\leq a\leq n,\]
\[\sum_{a=1}^{n+1}S_a\in \textrm{Eigenspace}_{\bar{c}}(-n),\]
\[X_{p-1,p}-X_{p-1,p+1}-X_{p,p+2}+X_{p+1,p+2}\in \textrm{Eigenspace}_{\bar{c}}(1),\textrm{ for }p\in\{2,\ldots,n-1\}.\]
Hence, for $n\neq 5$, we have obtained a basis of eigenvectors for
$S^2(so(n+1))$ and it becomes clear that
$R=\textrm{Eigenspace}_{\bar{c}}(-2)$.\\
\\
\noindent Next we treat the case $n=5$, when the eigenvalues -2 and
$-\frac{n-1}{2}$ coincide. Thus, we
obtain that $R\subsetneq \textrm{Eigenspace}_{\bar{c}}(-2)$.\\
\\
\noindent  If $n=5$ then $so(n+1)=so(6)$ and
$\wedge^{4}\mathbb{C}^{6}$ is an irreducible representation of
$so(6)$ isomorphic to the adjoint representation and isomorphic to
$R$. Let $\pi_i$, $i\in\{1,2,3\}$, be the fundamental weights of
$so(6)$ (as presented in the Dynkin diagram below) and
$V_{(a,b,c)}^d$ the irreducible highest weight $so(6)$-module of
dimension $d$ and with highest weight $a\pi_1+b\pi_2+c\pi_3$. Then,
the decomposition of $S^2 so(6)$ into irreducible modules is given
by:
\[S^2 so(6)=V_{(0,0,0)}^1+V_{(0,1,1)}^{15}+V_{(0,2,2)}^{84}+V_{(2,0,0)}^{20}.\]
It is easy to see that $R$ appears only once in this decomposition
($R\simeq V_{(0,1,1)}^{15}$), hence we can conclude that our results
will be independent of the Cartan subalgebra.

\begin{figure*}[h!]
  \caption{The Dynkin diagram of $so(6)$.}
  \centering
    \includegraphics[width=0.14\textwidth]{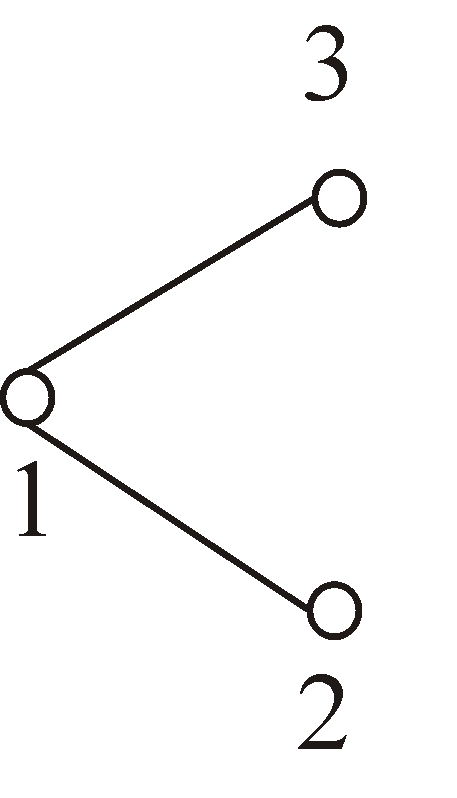}
\end{figure*}

\subsubsection{Some useful lemmas}

\noindent Our main goal is to find those highest weights $\lambda\in
H^*$ for which $Z(\lambda)$ is an irreducible highest weight module
of the simple $n$-Lie algebra $A$.\\
\\
 \noindent We start by showing that if $Q(A)\subseteq J(\lambda)$,
then the universal enveloping algebra $U$ of the n-Lie algebra $A$,
acts on $U/J(\lambda)=Z(\lambda)$, which means that $Z(\lambda)$ is
an irreducible $n$-Lie module. In the following Lemma we will prove
this claim and its converse\footnote{We have already mentioned this
equivalence above.}.
\begin{lema}
$Q(A)\subseteq J(\lambda)\textrm{ if and only if } U(V)\textrm{ acts
on }U/J(\lambda)$.
\end{lema}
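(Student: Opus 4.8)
The plan is to reduce the statement to the elementary fact that, since $Q(A)$ is a two-sided ideal of $U$, a left $U$-module is the same thing as a module over the quotient $U(A)=U/Q(A)$ precisely when $Q(A)$ acts by zero on it; and then to apply this to the cyclic module $Z(\lambda)=U/J(\lambda)$.

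First I would assemble the ingredients already available. By the lemma identifying the left ideal generated by $R$ with the right ideal generated by $R$, the set $Q(A)$ is a genuine two-sided ideal of $U$. Moreover $Z(\lambda)=U/J(\lambda)$ is a cyclic left $U$-module generated by the image $\bar{1}$ of $1$, and the annihilator of $\bar{1}$ in $U$ is exactly $J(\lambda)$ (because $J(\lambda)$ is a left ideal). These two observations are all that is needed.

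For the forward implication, assume $Q(A)\subseteq J(\lambda)$. Given $q\in Q(A)$ and $u\in U$, the $U$-action on $Z(\lambda)$ gives $q\cdot(u+J(\lambda))=qu+J(\lambda)$; since $Q(A)$ is a right ideal, $qu\in Q(A)\subseteq J(\lambda)$, so $q$ acts as $0$. Hence $Q(A)$ annihilates $Z(\lambda)$, the $U$-action factors through the projection $U\to U/Q(A)=U(A)$, and $U(A)$ acts on $U/J(\lambda)$. Conversely, if $U(A)$ acts on $U/J(\lambda)$ — compatibly with the given $so(n+1)$-action, which is what the statement means, i.e. $Z(\lambda)$ is an $n$-Lie module of $A$ in the sense of Section~2 — then $Q(A)$ annihilates $Z(\lambda)$; evaluating on the generator yields $0=q\cdot\bar{1}=q+J(\lambda)$ for every $q\in Q(A)$, so $Q(A)\subseteq J(\lambda)$.

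I do not expect a genuine obstacle here. The only points requiring care are the precise reading of ``$U(V)$ acts on $U/J(\lambda)$'' — it must mean an action compatible with the ambient $U$-module structure, so that the equivalence with ``$Q(A)$ acts trivially on $Z(\lambda)$'' is in force — and the use, in the forward direction, of the fact that $Q(A)$ is two-sided rather than merely a left ideal, which is supplied by the earlier lemma.
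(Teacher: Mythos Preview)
Your proof is correct and follows essentially the same route as the paper's: both directions rely on $Q(A)$ being a two-sided ideal, the forward implication computes $q\cdot[u]=[qu]$ with $qu\in Q(A)\subseteq J(\lambda)$, and the converse evaluates the trivial $Q(A)$-action on the cyclic generator $\bar 1$ to obtain $Q(A)\subseteq J(\lambda)$. Your version is slightly more explicit about the meaning of ``$U(V)$ acts'' and about invoking the earlier lemma that $Q(A)$ is two-sided, but the mathematical content is identical.
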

\begin{proof} \noindent Let $\rho:U\rightarrow
\textrm{End}(U/J(\lambda))$ be the representation, given by left
multiplication
\[\rho(u)(x)=u.x.\] Then, obviously, $\textrm{Ker}\rho\subseteq
J(\lambda)$.\\
\noindent "$\Leftarrow$"\\
\noindent Assume that $U/Q(A)$ acts on $Z(\lambda)$. Then, $Q(A)\subseteq\textrm{Ker}\rho\subseteq J(\lambda)$.\\
\noindent "$\Rightarrow$"\\
\noindent Assume that $Q(A)\subseteq J(\lambda)$. We want to show
that $Q(A)$ is also a subset of $\textrm{Ker}\rho$.\\
Let $q\in Q(A)$ and $[u]\in U/J(\lambda)$, where $u\in U$. Then
\begin{align*}
\rho(q)([u])=\rho(q\cdot u)(1)\in \rho(Q(V))(1)\subseteq
\rho(J(\lambda))(1)=J(\lambda).
\end{align*}
Thus $\rho(q)([u])=0$ and we are done.
\end{proof}
\noindent We can rephrase this Lemma as follows: $Z(\lambda)$ is a
representation of $A$ if and only if $Q(A)\subseteq J(\lambda)$.
Thus, we want to see for which $\lambda\in H^*$ the inclusion above
holds. The next Lemma gives us a useful method for checking this
inclusion.
\begin{lema}
$Q(A)\nsubseteq J(\lambda)\textrm{ if and only if }
Q(A)+I(\lambda)=U.$
\end{lema}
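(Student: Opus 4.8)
The plan is to exploit two facts: that $Q(A)$ is a two-sided ideal of $U$ (hence in particular a left ideal), and that, by its very definition, $J(\lambda)$ is the \emph{unique} maximal left ideal of $U$ containing $I(\lambda)$. The latter is the standard structural fact about Verma modules: since the highest weight space of $V(\lambda)=U/I(\lambda)$ is one-dimensional, $V(\lambda)$ has a unique maximal proper submodule, and therefore \emph{every} proper left ideal of $U$ containing $I(\lambda)$ is already contained in $J(\lambda)$. From this, note that $Q(A)+I(\lambda)$ is again a left ideal of $U$, and it clearly contains $I(\lambda)$.

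For the implication ``$\Leftarrow$'' I would argue by contradiction. Assume $Q(A)+I(\lambda)=U$ but $Q(A)\subseteq J(\lambda)$. Then, using $I(\lambda)\subseteq J(\lambda)$, we would obtain $U=Q(A)+I(\lambda)\subseteq J(\lambda)$, which is impossible since $J(\lambda)$, being a maximal left ideal, is proper. Hence $Q(A)\nsubseteq J(\lambda)$.

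For the implication ``$\Rightarrow$'' I would prove the contrapositive. Suppose $Q(A)+I(\lambda)\neq U$. Then $Q(A)+I(\lambda)$ is a proper left ideal of $U$ containing $I(\lambda)$, so by the maximality-and-uniqueness property of $J(\lambda)$ recalled above we must have $Q(A)+I(\lambda)\subseteq J(\lambda)$; in particular $Q(A)\subseteq J(\lambda)$. This is precisely the negation of $Q(A)\nsubseteq J(\lambda)$, completing the proof.

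There is no genuinely hard step here. The only point requiring care is the appeal to the uniqueness of the maximal left ideal above $I(\lambda)$ — equivalently, that the sum of two proper submodules of the cyclic module $V(\lambda)$ is again proper. This is exactly the fact already used implicitly when $Z(\lambda)$ and $J(\lambda)$ were introduced, so for the present Lemma it can simply be quoted rather than reproved.
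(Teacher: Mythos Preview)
Your proof is correct and follows essentially the same approach as the paper's: both directions are argued by contradiction/contrapositive using that $J(\lambda)$ is proper and is the unique maximal left ideal containing $I(\lambda)$, so any proper left ideal containing $I(\lambda)$ (in particular $Q(A)+I(\lambda)$ when it is proper) lies in $J(\lambda)$. The only cosmetic difference is that the paper phrases the forward direction by first picking a maximal left ideal $M\supseteq Q(A)+I(\lambda)$ and identifying $M=J(\lambda)$, whereas you invoke the uniqueness property directly; the content is identical.
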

\begin{proof}
\noindent "$\Leftarrow$"\\
\noindent Assume that $Q(A)+I(\lambda)=U$. Let us suppose that
$Q(A)\subseteq J(\lambda)$. It follows that
$Q(A)+I(\lambda)\subseteq J(\lambda)$. But $J(\lambda)$ is strictly
included in $U$. Contradiction.\\
\noindent "$\Rightarrow$"\\
\noindent Next we prove the converse implication. Assume that
$Q(A)\nsubseteq J(\lambda)$ and suppose that $Q(A)+I(\lambda)\neq
U$. Let $M$ be the maximal left ideal of $U$ such that
$Q(A)+I(\lambda)\subseteq M$. By definition $J(\lambda)$ is the
unique maximal left ideal of $U$ which contains $I(\lambda)$, hence
$M=J(\lambda)$. It follows that $Q(A)+I(\lambda)\subseteq
J(\lambda)\Rightarrow Q(A)\subseteq J(\lambda)$. Contradiction.
\end{proof}
\noindent $Q(A)+I(\lambda)=U$ is equivalent to the fact that $1\in
Q(A)+I(\lambda)$, where $1$ is the unit in $U$. Denote by
$\hat{Q}(A):=\frac{Q(A)+I(\lambda)}{I(\lambda)}\subseteq
V(\lambda)=U/I(\lambda)$ and by $\mathbb{1}:=1+I(\lambda)$. We can
rewrite the above equality as $\mathbb{1}\in \hat{Q}(A)$.\\
\noindent We define the following map:\[pr_{\lambda}:V(\lambda)\to
\textrm{Span}\mathbb{1}=V(\lambda)_{\lambda},\] where
$V(\lambda)_{\lambda}$ denotes the 1-dimensional subspace of
$V(\lambda)$ with weight $\lambda$. With this notation we obtain the
following equivalence:\[\mathbb{1}\in\hat{Q}(A)\textrm{ if and only
if there exists } x\in \hat{Q}(A)\textrm{ s.t. }pr_{\lambda}(x)\neq
0.\] The implication "$\Rightarrow$" is trivial, hence in order to
convince ourself that the above equivalence holds we only need to
check the converse implication.
\[\textrm{Let }x\in \hat{Q}(A) \textrm{ with } pr_{\lambda}(x)\neq 0,\textrm{ then }\hat{Q}(A)\nsubseteq \textrm{Ker} (pr_{\lambda}).\]
\[\textrm{On the other hand }\hat{J}(\lambda)=J(\lambda)/I(\lambda)\subseteq\textrm{Ker} (pr_{\lambda})\]
\[\Rightarrow \hat{Q}(A)\nsubseteq \hat{J}(\lambda)\Rightarrow \mathbb{1}\in \hat{Q}(A).\]

\noindent The next lemma is the most important one in this section.

\begin{lema}
$A$ has a representation on $Z(\lambda)$ if and only if
$pr_\lambda(\hat{R})=0$, where
$\hat{R}=\frac{R+I(\lambda)}{I(\lambda)}$.
\end{lema}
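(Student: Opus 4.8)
\emph{Proof strategy.} The plan is to reduce everything to the two preceding lemmas and one genuinely new computation. By those lemmas, $A$ has a representation on $Z(\lambda)$ iff $Q(A)\subseteq J(\lambda)$ iff $Q(A)+I(\lambda)\neq U$ iff $\mathbb{1}\notin\hat{Q}(A)$, and by the equivalence established just above the lemma this last statement is in turn equivalent to $pr_\lambda(\hat{Q}(A))=0$. So it suffices to prove $pr_\lambda(\hat{Q}(A))=0$ if and only if $pr_\lambda(\hat{R})=0$. One implication is free: $R\subseteq Q(A)$ gives $\hat{R}\subseteq\hat{Q}(A)$, hence $pr_\lambda(\hat{Q}(A))=0$ forces $pr_\lambda(\hat{R})=0$. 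The content is the converse.

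For the converse I would first record the structural inputs. Since $Q(A)$ is the left ideal of $U$ generated by $R$ (earlier lemma), the image $\hat{Q}(A)$ of $Q(A)$ in $V(\lambda)=U/I(\lambda)$ is exactly the $U$-submodule $U\cdot\hat{R}$ generated by $\hat{R}$. Writing $\bar u$ for the image of $u\in U$ in $V(\lambda)$: the elements $x_\alpha$, $\alpha\in\Phi^{+}$, span the nilradical $\mathfrak{n}^{+}$ and each $Ux_\alpha$ sits in $I(\lambda)$, so $U\mathfrak{n}^{+}\subseteq I(\lambda)$ and therefore $U\mathfrak{n}^{+}$ kills $\mathbb{1}=\bar 1$. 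Also, $R$ is a finite-dimensional submodule of $U$ under the adjoint (commutator) action of $so(n+1)$ — this is the module structure of the earlier lemma — so $R$ is graded by $H$-weights, $R=\bigoplus_\nu R_\nu$. Finally, using $r(h-\lambda(h)\cdot 1)\in I(\lambda)$ for all $r\in U$, $h\in H$, a one-line computation of the $H$-action shows $\overline{R_\nu}\subseteq V(\lambda)_{\lambda+\nu}$; in particular $pr_\lambda(\hat R)$ is precisely $\overline{R_0}$, the image of the $H$-invariant part of $R$.

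The heart of the argument is then to show that $pr_\lambda(\overline{ur})$ lies in $\overline{R_0}$ for every $u\in U$ and every $r\in R$. I would decompose $u$ via the triangular PBW decomposition $U=U(\mathfrak{n}^{-})U(H)U(\mathfrak{n}^{+})$ and use $\overline{ur}=u\cdot\bar r$. Tracking weights, the only monomials $n^{-}hn^{+}$ that can push a weight-$\lambda$ vector into $u\cdot\bar r$ are those with $n^{-}$ a scalar — otherwise the weight of $\bar r$ would have to be raised above $\lambda$, which is impossible in $V(\lambda)$ — and for those, $h$ acts on the top weight space by $\lambda(h)$ while $n^{+}$ must have $H$-weight $-\nu$ if $r\in R_\nu$. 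Thus everything comes down to showing $\overline{er}\in\overline{R_0}$ for $e\in U(\mathfrak{n}^{+})$ of $H$-weight $-\nu$ and $r\in R_\nu$. This I would prove by induction on $\deg e$: writing $e=e'x$ with $x\in\mathfrak{n}^{+}$, the identity $xr=[x,r]+rx$ gives $er=e'[x,r]+e'rx$ where $e'rx\in U\mathfrak{n}^{+}\subseteq I(\lambda)$ and $[x,r]=x\cdot_{\mathrm{ad}}r\in R$ by adjoint-invariance; applying the inductive hypothesis to $e'$ and $x\cdot_{\mathrm{ad}}r$ yields $\overline{er}=\overline{e\cdot_{\mathrm{ad}}r}$, and $e\cdot_{\mathrm{ad}}r\in R_0$ because $e$ has $H$-weight $-\nu$. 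Hence $pr_\lambda(\hat Q(A))=pr_\lambda(U\hat R)\subseteq\overline{R_0}=pr_\lambda(\hat R)$, so $pr_\lambda(\hat R)=0$ implies $pr_\lambda(\hat Q(A))=0$, finishing the equivalence.

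I expect this last step to be the real obstacle, and it is worth being explicit about why it needs actual input: for a general subspace $S\subseteq V(\lambda)$ it is \emph{false} that $pr_\lambda(US)=pr_\lambda(S)$ — a Verma module can be generated as a $U$-module by a single vector of weight strictly below $\lambda$. The conclusion genuinely uses the two special properties of $R$, namely its stability under the adjoint action of $so(n+1)$ and the fact that $\mathfrak{n}^{+}$ annihilates the canonical generator $\mathbb{1}$; keeping the weight bookkeeping straight through the PBW decomposition, in particular ruling out that an "off-diagonal" monomial $n^-hn^+$ with $n^-\notin\mathbb{C}$ contributes a weight-$\lambda$ term, is the delicate point.
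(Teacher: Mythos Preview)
Your proof is correct and follows essentially the same approach as the paper: reduce to showing $pr_\lambda(\hat Q(A))=0\Leftrightarrow pr_\lambda(\hat R)=0$, then prove the nontrivial implication by an inductive commutation argument exploiting that $R$ is stable under the adjoint action of $so(n+1)$. The only cosmetic difference is that the paper writes $Q(A)=RU$ and commutes $r$ past \emph{negative} root vectors (the residual $x_{\alpha_1}\cdot r\,x_{\alpha_2}\cdots x_{\alpha_k}\mathbb{1}$ dies because $r\,x_{\alpha_2}\cdots x_{\alpha_k}\mathbb{1}$ would have weight $\succ\lambda$), whereas you write $Q(A)=UR$ and commute \emph{positive} root vectors past $r$ (the residual $e'rx$ dies because $x\in\mathfrak n^+$ kills $\mathbb{1}$); these are mirror images of the same computation.
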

\begin{proof}
\noindent By the last equivalence above and the lemmas before, it
follows that
\[A\textrm{ has a representation on
}Z(\lambda)\textrm{ if and only if }\hat{Q}(A)\subseteq
\textrm{Ker}(pr_\lambda).\] Observe that the last equivalence holds
in general. Hence, we just have to show that
\[\hat{Q}(A)\subseteq
\textrm{Ker}(pr_\lambda)\textrm{ if and only if
}\hat{R}\subseteq\textrm{Ker}(pr_\lambda).\] Since
$\hat{R}\subseteq\hat{Q}(A)$, the implication "$\Rightarrow$"
is obvious.\\
\noindent "$\Leftarrow$"\\
\noindent Assume that $\hat{R}\subseteq \textrm{Ker}(pr_\lambda)$
and let $[q]\in \hat{Q}(A)$, where $q\in Q(A)$. Since
$Q(A)=URU=UR=RU$, the ideal $Q(A)$ is spanned by elements of the
form $q=r\cdot u$, where $r\in R$ and $u\in U$, hence $[q]=[r\cdot
u]=r\cdot[u]$, where $[u]\in U/I(\lambda)=V(\lambda)$. This implies
that $[u]=\sum[x_{\alpha_1}x_{\alpha_2}\ldots x_{\alpha_k}]$, where
$\alpha_i\in\Phi^-$, for all $i$ in the set $\{1,\ldots,k\}$. This
shows that $\hat{Q}(A)$ is spanned by elements of the form $r\cdot
x_{\alpha_1}x_{\alpha_2}\ldots x_{\alpha_k}\cdot \mathbb{1}$, where
$r\in R$ and $\alpha_i\in\Phi^-$. Since $R$ is a finite dimensional
$\wedge^{n-1}A\cong so(n+1)$-module and $so(n+1)$ is (semi-)simple,
it follows that $R$ is decomposable into weight spaces. Let
$\{r_1,\ldots,r_k\}$ be a basis of $R$, where $weight(r_i)=\mu_i$.
Then, $\hat{Q}(A)$ is spanned by elements of the form $r_i\cdot
x_{\alpha_1}x_{\alpha_2}\ldots x_{\alpha_k}\cdot \mathbb{1}$. To see
if $\hat{Q}(A)$ is indeed contained in $\textrm{Ker}(pr_\lambda)$,
we have to compute $pr_\lambda(r_i\cdot
x_{\alpha_1}x_{\alpha_2}\ldots x_{\alpha_k}\cdot \mathbb{1})$. Since
$weight(\mathbb{1})=\lambda$, it follows that we need to compute
this projection for those elements for which
$weight(x_{\alpha_1}x_{\alpha_2}\ldots x_{\alpha_k})+weight(r_i)=0$,
i.e.
$\alpha_1+\alpha_2+\ldots+\alpha_k+\mu_i=0$:\[pr_\lambda(r_i\cdot
x_{\alpha_1}x_{\alpha_2}\ldots x_{\alpha_k}\cdot
\mathbb{1})=pr_\lambda([ r_i, x_{\alpha_1}] x_{\alpha_2}\ldots
x_{\alpha_k}\cdot \mathbb{1})+pr_\lambda( x_{\alpha_1}\cdot r_i\cdot
x_{\alpha_2}\ldots x_{\alpha_k}\cdot \mathbb{1}).\] We already know
that in the first term of the sum, $[ r_i, x_{\alpha_1}]$ is a
linear combination of generators of $Q(A)$ ($R$ being a module for
$so(n+1)$), hence $[ r_i, x_{\alpha_1}]=\sum_j a_jr_j$. The second
term of the sum is zero, since $weight(r_i\cdot x_{\alpha_2}\ldots
x_{\alpha_k}\cdot \mathbb{1}))\succ \lambda$. Thus, by an inductive
argument, we obtain \[pr_\lambda(r_i\cdot
x_{\alpha_1}x_{\alpha_2}\ldots x_{\alpha_k}\cdot \mathbb{1})=\sum_j
a_j pr_\lambda(r_j\cdot\mathbb{1})=0.\] This proves the claim of the
Lemma.
\end{proof}
\noindent Let $r_i\in R$. If $weight(r_i)=\mu_i\neq 0$ then
$weight(r_i\cdot\mathbb{1})\neq\lambda $ and it follows that
$pr_\lambda(r_i\cdot\mathbb{1})=0$. This shows that in order for
$Z(\lambda)$ to be an $n$-Lie module, we just have to look at those
elements $r_0\in R$ for which $weight(r_0)=0$ and insure that
$pr_\lambda(r_0\cdot\mathbb{1})=0$.

\section{Main theorems and their proofs}

\noindent In this section we will concentrate on the important
results of this paper. The basic Lie algebra of the simple $n$-Lie
algebra $A$ is $so(n+1)$ and we denote by $H$ a Cartan subalgebra of
this Lie algebra. Recall that our goal is to determine for which
$\lambda\in H^*$, the two-sided ideal $Q(A)$ acts trivially on the
irreducible, highest weight module $Z(\lambda)$. This will insure
that $Z(\lambda)$ is an $n$-Lie module of $A$. We will first explain
the notation used in the statements of the theorems.

\subsection{so(n+1)}

\noindent Until now, the $(n+1)$-dimensional, complex vector space
$A$ came attached with the standard basis $e_1,\ldots,e_{n+1}$, the
inner product $\langle e_i,e_j\rangle=\delta_{i,j}$ and the orientation form
$e_1\wedge\ldots\wedge e_{n+1}$. This was useful for us because of
several reasons: the easy expression of the $n$-ary bracket, the
simple form of the generators of $Q(A)$ being just two examples we
have encountered so far. It will be useful for us, further on, to have a complex bilinear form instead of an inner product on A. Denote by $(\cdot,\cdot)$ the bilinear form on A
defined by
\begin{equation}
\label{j3}
(e_i,e_j)=\delta_{i,j}, \quad 1\le i,j\le n+1.
\end{equation}
Since $\textrm{End}(A)$ and $A\otimes A$ are isomorphic,
$so(n+1)\subset\textrm{End}(A)$ can be identified with $\wedge^2A$
(cf (\ref{j2})). To give the root basis of this $so(n+1)$ it will be
convenient to have an other basis of $A$. For this we need to take
into account the parity of $n+1$. If $n+1=2N$, we define
\begin{equation}
\label{j4} v_{\pm j}=\frac{e_{2j-1}\mp i e_{2j}}{\sqrt 2},\quad
j=1,2,\ldots, N,
\end{equation}
then a basis of $A$ will be
denoted by $v_{-N},\ldots,v_{-1},v_1,\ldots,v_N$, and
\[(v_i,v_j)=\delta_{i+j,0}.\]
If, on the other hand, $n+1=2N+1$ then
we need one more element, viz. $v_{0}:=e_{2N+1}$.
Then the basis of $A$ is given by
$v_{-N},\ldots,v_{-1},v_0,v_1,\ldots,v_N$, and again
$(v_i,v_j)=\delta_{i+j,0}$. Hence, a basis of $so(n+1)\simeq \wedge^2A$ is given
by elements of the form (cf. (\ref{j2}))
\[
v_{j}\wedge v_{k}=\frac{v_j\otimes v_k-v_k\otimes v_j}{2},\
\mbox{where } -N\le j<k\le N
\] and
$j,k\ne 0$ if $n+1$ is even.
The relation between the two notations used is given by:
\[v_{\nu j}\wedge v_{\mu k}=\frac{1}{2}(e^{2j-1,2k-1}-\nu\mu e^{2j,2k}-i(\nu e^{2j,2k-1}+\mu e^{2j-1,2k})),\]
\[v_0\wedge v_{\nu j}=\frac{1}{\sqrt{2}}(e^{2j-1,2N+1}-i\nu e^{2j,2N+1}).\]
The natural bilinear form on $so(n+1)$ is the normalized trace form
$(M_1|M_2):=\frac{1}{2}\mbox{trace}(M_1M_2)$, which is
nondegenerate,invariant and  symmetric. This form is induced by a
natural form on $\textrm{End}(A)\simeq A\otimes A$:
\[
((u_1\otimes u_2)|(w_1\otimes w_2))=2(u_1,w_2)(u_2,w_1).
\]
\noindent Define
\begin{equation}
\label{j5} \epsilon_j:=ie^{2j-1,2j}=i(e_{2j-1}\wedge
e_{2j})=i\frac{e_{2j-1}\otimes e_{2j}-e_{2j}\otimes
e_{2j-1}}{2},\quad\mbox{where } 1\leq j\leq N.
\end{equation}
Then $\epsilon_j=v_j\wedge v_{-j}$ and
$H:=\oplus_{i=1}^{N}\mathbb{C}\epsilon_i$ is a Cartan subalgebra of
the Lie algebra $so(n+1)$. Let $h\in H$, then the commutator of $h$
and $v_{\nu j}\wedge v_{\mu k}$ is: \[[h, v_{\nu j}\wedge v_{\mu
k}]=(\nu\epsilon_j+\mu\epsilon_k|h)v_{\nu j}\wedge v_{\mu k}.\]
\[[ h,v_0\wedge v_{\nu j}]=(\nu\epsilon_j|h)v_0\wedge v_{\nu j}.\]
\begin{remark}
$v_0\wedge v_{\nu j}$ should be considered to be $ v_{\nu j}\wedge
v_0$ in case $\nu=-1$, and not $-v_{\nu j}\wedge v_0$ as would
result from the antisymmetry of the exterior product. For this
reason we will write $\nu (v_0\wedge v_{\nu j})$.
\end{remark}
\noindent Thus, if $n+1=2N$, we define the set of roots
\[\Phi=\{\pm(\epsilon_i\pm\epsilon_j)|1\leq i\neq j\leq N\}\] and a
base for this root system is given by
\[\Delta=\{\epsilon_1-\epsilon_2,\epsilon_2-\epsilon_3,\ldots,\epsilon_{N-1}-\epsilon_N,\epsilon_{N-1}+\epsilon_N\}.\]
Hence, we obtain the following root space decomposition for
$so(2N)$:\[so(2N)=\bigoplus_{\substack{1\leq j<k\leq N\\
\mu\in\{+1,-1\}}}\mathbb{C}(v_{ -j}\wedge v_{\mu
k})\bigoplus\bigoplus_{1\leq j\leq
N}\mathbb{C}\epsilon_j\bigoplus\bigoplus_{\substack{1\leq j<k\leq N\\
\mu\in\{+1,-1\}}}\mathbb{C}(v_{j}\wedge v_{\mu k}).\]
If $n+1=2N+1$, then the set of roots is
\[\Phi=\{\pm(\epsilon_i\pm\epsilon_j)|1\leq i\neq j\leq N\}\cup\{\pm\epsilon_i|1\leq i\leq
N\},\] while a base for this root system is given by
\[\Delta=\{\epsilon_1-\epsilon_2,\epsilon_2-\epsilon_3,\ldots,\epsilon_{N-1}-\epsilon_N,\epsilon_N\}.\]
This allows us to give the following root space decomposition of the
Lie algebra $so(2N+1)$:
\[so(2N+1)=\bigoplus_{\substack{1\leq j<k\leq N\\ \mu\in \{+1,-1\}}}\mathbb{C}(v_{-j}\wedge v_{\mu k})\bigoplus_{1\leq j\leq N}\mathbb{C}(v_{-j}\wedge v_0)\bigoplus_{1\leq j\leq N}\mathbb{C}\epsilon_j\bigoplus\]\[\bigoplus_{1\leq j\leq
N}\mathbb{C}(v_{0}\wedge v_j)\bigoplus_{\substack{1\leq j<k\leq N\\
\mu\in \{+1,-1\}}}\mathbb{C}(v_{j}\wedge v_{\mu k}).\] The first
line of the formula above contains the negative side of the root
space decomposition and the Cartan subalgebra, while on the second
line just the positive side is listed.

\subsection{Statements of the main theorems}

Recall that $H:=\oplus_{i=1}^{N}\mathbb{C}\epsilon_i$ is a Cartan
subalgebra of the Lie algebra $so(n+1)$. Let $\lambda$ be the
highest weight of the highest weight module $Z(\lambda)$ and denote
by $\lambda_i$ the value of $\lambda$ on $\epsilon_i$.

\begin{thm} \label{t4.1}Let $n\geq 3$. The highest weight, irreducible
representation $Z(\lambda)$ of $so(n+1)$ is a highest weight,
irreducible representations of the simple $n$-Lie algebra $A$ if and
only if $\lambda\in H^*$ is such that
$\lambda_1=\lambda_2=\ldots=\lambda_{t-1}=-1$, $\lambda_t=x\in
\mathbb{C}$ and
$\lambda_{t+1}=\ldots=\lambda_{\lfloor\frac{n+1}{2}\rfloor}=0$, for
some $1\leq t\leq \lfloor\frac{n+1}{2}\rfloor$.
\end{thm}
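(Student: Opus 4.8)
The plan is to use the last lemma of the previous section together with the observation following its proof: $A$ has a representation on $Z(\lambda)$ if and only if $pr_\lambda(r_0\cdot\mathbb{1})=0$ for every weight-zero element $r_0$ of $R$ (only weight-zero $r_0$ matter, since $r\cdot\mathbb{1}$ has weight $\lambda+\mathrm{wt}(r)$). So the first task is to determine the weight-zero subspace of $R$. Using the $so(n+1)$-module isomorphism $\tilde\psi:\wedge^4A\to R$ and the weight decomposition of $A$ in the basis $v_{\pm1},\dots,v_{\pm N}$ (and $v_0$ when $n+1$ is odd), where $v_{\pm j}$ has weight $\pm\epsilon_j$ and $v_0$ has weight $0$, a short weight count shows that the only monomials $v_a\wedge v_b\wedge v_c\wedge v_d$ of weight zero are the $v_j\wedge v_{-j}\wedge v_k\wedge v_{-k}$ with $1\le j<k\le N$; in particular $v_0$ cannot occur, since no three distinct nonzero weights sum to $0$. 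Hence the weight-zero part of $R$ has basis
\[
r_0^{(j,k)}:=\psi(v_j\wedge v_{-j}\wedge v_k\wedge v_{-k})=\epsilon_j\odot\epsilon_k-(v_j\wedge v_k)\odot(v_{-j}\wedge v_{-k})+(v_j\wedge v_{-k})\odot(v_{-j}\wedge v_k),
\]
for $1\le j<k\le N$, where we used $\epsilon_j=v_j\wedge v_{-j}$.

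The main step is to compute $pr_\lambda(r_0^{(j,k)}\cdot\mathbb{1})$. Each summand has the form $X\odot Y$: in the first, $X,Y\in H$; in the second and third, $X$ is a root vector for the positive root $\epsilon_j+\epsilon_k$ (resp.\ $\epsilon_j-\epsilon_k$, which is positive because $j<k$) and $Y$ is the matching negative root vector. A raising operator annihilates $\mathbb{1}$, so in the last two cases $pr_\lambda((X\odot Y)\cdot\mathbb{1})=\tfrac12\,pr_\lambda([X,Y]\cdot\mathbb{1})$ with $[X,Y]=(X|Y)\,t_\alpha\in H$; from $(\epsilon_i|\epsilon_j)=\delta_{ij}$ one computes $(v_j\wedge v_k|v_{-j}\wedge v_{-k})=(v_j\wedge v_{-k}|v_{-j}\wedge v_k)=-1$ and $t_{\epsilon_j\pm\epsilon_k}=\epsilon_j\pm\epsilon_k$, so that these summands contribute $-\tfrac12(\lambda_j+\lambda_k)$ and $-\tfrac12(\lambda_j-\lambda_k)$, while the first summand contributes $\lambda_j\lambda_k$ because $\epsilon_j,\epsilon_k$ commute and act by $\lambda_j,\lambda_k$ on $\mathbb{1}$. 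Altogether
\[
pr_\lambda(r_0^{(j,k)}\cdot\mathbb{1})=\lambda_j\lambda_k+\tfrac12(\lambda_j+\lambda_k)-\tfrac12(\lambda_j-\lambda_k)=\lambda_k(\lambda_j+1),\qquad 1\le j<k\le N.
\]
Only semisimplicity of $so(n+1)$ enters, so the case $so(4)$ is included; and for $n=5$ the weight-zero part of $R$ is still exactly this $\binom{N}{2}$-dimensional span, so the formula is unchanged. This computation is the delicate point: one must track which factor of each $X\odot Y$ raises and which lowers, and fix the normalizations (the value $-1$ of the trace form on the two pairs of root vectors, and $t_{\epsilon_j\pm\epsilon_k}=\epsilon_j\pm\epsilon_k$) so that the quadratic collapses to $\lambda_k(\lambda_j+1)$ and not to $\lambda_j(\lambda_k+1)$ — the sign decides whether the $-1$'s in the conclusion sit among the small or the large indices.

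It remains to solve the system $\lambda_k(\lambda_j+1)=0$, $1\le j<k\le N$. Let $t$ be the least index with $\lambda_t\ne-1$, or $t=N$ if every $\lambda_i=-1$. Then $\lambda_1=\cdots=\lambda_{t-1}=-1$ by the choice of $t$, and for each $k>t$ the equation of the pair $(t,k)$ reads $\lambda_k(\lambda_t+1)=0$ with $\lambda_t+1\ne0$, forcing $\lambda_{t+1}=\cdots=\lambda_N=0$; so $\lambda$ has the asserted shape with $x=\lambda_t$. Conversely, if $\lambda$ has that shape and $1\le j<k\le N$, then either $j\le t-1$, whence $\lambda_j+1=0$, or $j\ge t$, whence $k\ge t+1$ and $\lambda_k=0$; either way $\lambda_k(\lambda_j+1)=0$. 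Combined with the reduction lemma, this proves the theorem.
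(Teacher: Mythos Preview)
Your proof is correct and follows the same strategy as the paper: reduce via the key lemma to the vanishing of $pr_\lambda(r_0\cdot\mathbb{1})$ for weight-zero $r_0\in R$, identify those weight-zero elements, compute their action on $\mathbb{1}$, and arrive at the system $\lambda_k(\lambda_j+1)=0$ for $j<k$. The computation you carry out is exactly the one in the paper (your $r_0^{(j,k)}$ is the paper's $v_{j,j,k,k}(1,-1,1,-1)$, and both reduce to $(\epsilon_j\epsilon_k+\epsilon_k)\cdot\mathbb{1}$).

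There are two organizational differences worth noting. First, you exploit the isomorphism $\tilde\psi:\wedge^4A\to R$ from the start to read off the weight-zero subspace of $R$ as the weight-zero subspace of $\wedge^4A$; this lets you treat the even and odd cases simultaneously (your observation that three distinct $\pm\epsilon$-weights cannot sum to zero disposes of $v_0$ in one line), whereas the paper rewrites the generators in the $v$-basis and then splits into the cases $n+1$ even and $n+1$ odd, with a separate graphical derivation. Second, you spell out the solution of the polynomial system $\lambda_k(\lambda_j+1)=0$ explicitly (introducing the minimal index $t$ with $\lambda_t\neq-1$), which the paper leaves to the reader. Both are tidy improvements, but the underlying argument is the same.

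One small wording issue: when you say ``these summands contribute $-\tfrac12(\lambda_j+\lambda_k)$ and $-\tfrac12(\lambda_j-\lambda_k)$'' you mean the bare factors $(X\odot Y)$, not the signed terms of $r_0^{(j,k)}$; the signs are then applied in your displayed ``Altogether'' line. This is clear from context and the final formula $\lambda_k(\lambda_j+1)$ is correct, but you might rephrase to avoid momentary confusion.
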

\noindent From general theory of irreducible Lie algebra
representations it follows that if $x\in\mathbb{Z}_+$ and $t=1$ we
obtain a finite dimensional, irreducible representation of $A$ and
otherwise
an infinite dimensional, irreducible one. Thus, we have recovered the result in \cite{Dzhumadil'daev}. Our proof however, will be different from the one presented there.\\
\\
\noindent When stating the above theorem in terms of the fundamental
weights of $so(n+1)$, one needs to be careful about the distinction
between the two cases: $n+1$ is even or $n+1$ is odd. This gives
Theorems \ref{j6} and \ref{j7}.

\noindent The rest of this section will contain the proof of Theorem
\ref{t4.1}. We will follow the strategy described above, namely we
will determine the generators of $Q(A)$ of weight zero and impose
the condition that they must act trivially on $Z(\lambda)$. These
conditions can be expressed as the zero set of a set of polynomials
in $\lambda_1,\ldots,\lambda_{\lfloor\frac{n+1}{2}\rfloor}$. These
solutions can be read in the statement of the theorem. There will be
two proofs given: a computational proof which will differentiate
between the two cases: $n+1$ is even or odd; and a second proof
which will make use of the graphical language introduced before.
This second proof has the advantage of working in both cases while
no additional distinction needs to be made. This graphical proof
also has a disadvantage: as required, it does supply the polynomials
we where talking about, but it does not supply any indication why
the obtained polynomials suffice.

\subsection{n+1 is even}
\noindent First, we treat the case: $n+1=2N$. This means that the Lie algebra $\wedge^{n-1}A$ is $so(2N)$.\\
\\
\noindent We want to compute the generators of $Q(A)$ in terms of
the elements $v_{\nu j}\wedge v_{\mu k}$. By inverting the matrix
which defines the $v_{\nu j}\wedge v_{\mu k}$'s in terms of the
$e^{j,k}$'s we obtain the following equality.
\[\left(\begin{array}{c}
e^{2j-1,2k-1}\\
e^{2j,2k}\\
e^{2j,2k-1}\\
e^{2j-1,2k}
\end{array}
\right)=\frac{1}{2}\left(
\begin{array}{cccc}
1&1&1&1\\
-1&1&1&-1\\
i&-i&i&-i\\
i&i&-i&-i
\end{array}
\right) \left(
\begin{array}{c}
v_{j}\wedge v_{k}\\
v_{-j}\wedge v_{k}\\
v_{j}\wedge v_{-k}\\
v_{-j}\wedge v_{-k}
\end{array}
\right)\] \noindent We will avoid to write long, tedious
computations, and jump ahead to the final result. The most important
piece of information is the matrix above.\\
\\
\noindent Recall that
$x_{i_1,i_2,i_3,i_4}=e^{i_1,i_2}e^{i_3,i_4}-e^{i_1,i_3}e^{i_2,i_4}+e^{i_1,i_4}e^{i_2,i_3}$.
This formula can also be rewritten
as:\[x_{i_1,i_2,i_3,i_4}=\frac{1}{8}\sum_{\sigma\in
S_4}sgn(\sigma)e^{i_{\sigma(1)},i_{\sigma(2)}}e^{i_{\sigma(3)},i_{\sigma(4)}}.\]
It follows that for any $\tau$ in $S_4$,
\[x_{i_{\tau(1)},i_{\tau(2)},i_{\tau(3)},i_{\tau(4)}}=sgn(\tau)x_{i_1,i_2,i_3,i_4}.\]
Therefore, if two indexes are equal, the formula above tells us that
$x_{i_1,i_2,i_3,i_4}=0$. Hence
\begin{align*}&\textrm{Span}\{x_{i_1,i_2,i_3,i_4}\}_{1\leq i_1,i_2,i_3,i_4\leq
2N}=\\&=\textrm{Span}\{\frac{1}{8}\sum_{\sigma\in
S_4}sgn(\sigma)e^{i_{\sigma(1)},i_{\sigma(2)}}e^{i_{\sigma(3)},i_{\sigma(4)}}\}_{1\leq
i_1,i_2,i_3,i_4\leq 2N}=\\&=R.
\end{align*}
\\
\noindent Now we are ready to give the generators of $Q(A)$
expressed in the new notation:
\[\textrm{Span}\{x_{i_1,i_2,i_3,i_4}\}_{1\leq i_1,i_2,i_3,i_4\leq
2N}=\] \begin{align*}=\textrm{Span}\{\frac{1}{2} ((v_{i}\wedge
v_{j})(v_{k}\wedge v_{l})-(v_{i}\wedge v_{k})(v_{j}\wedge
v_{l})+(v_{i}\wedge v_{l})(v_{j}\wedge v_{k})+\\ +(v_{k}\wedge
v_{l})(v_{i}\wedge v_{j})-(v_{j}\wedge v_{l})(v_{i}\wedge
v_{k})+(v_{j}\wedge v_{k})(v_{i}\wedge v_{l}))\},\end{align*} where
$i,j,k,l$ are considered to be ordered and they range from $-N$ to
$N$, excluding 0. A generator, expressed in this notation, will be
denoted by $v_{i,j,k,l}(\alpha,\beta,\gamma,\delta)$, where
$\alpha,\beta,\gamma$ and $\delta$ represent the signs of the
indices. These indices range now between 1 and $N$.\\
\\
\noindent Recall that $x_{a,b,c,d}$ is zero as soon as any two
indices are equal. Without loss of generality we may assume that
$a\leq b\leq c\leq d$. For $
v_{a,b,c,d}(\alpha,\beta,\gamma,\delta)$ this fact does not hold
anymore, i.e. if any two indices are equal, then $
v_{a,b,c,d}(\alpha,\beta,\gamma,\delta)$ is
zero if the corresponding signs are also equal.\\
\\
\noindent It is easy to see that the 6 terms in the expression of
the generator $ v_{a,b,c,d}(\alpha,\beta,\gamma,\delta)$ all have
the same weight, namely
\[weight(v_{a,b,c,d}(\alpha,\beta,\gamma,\delta))=\alpha\epsilon_a+\beta\epsilon_b+\gamma\epsilon_c+\delta\epsilon_d.\]
Since we are looking for those elements $r_0\in R$ for which
$weight(r_0)=0$, it follows that for
$v_{a,b,c,d}(\alpha,\beta,\gamma,\delta)$ to be such an element we
must have \[a=b,c=d\textrm{ and
}\alpha=\gamma=1,\beta=\delta=-1.\footnote{Recall that $a\leq b\leq
c\leq d$ and that the equality of 3 consecutive indexes is not
possible. Observe that for different choices of signs, the resulting
generator gets multiplied by $\pm 1$.}\] \noindent In this case, our
generator of $Q(A)$ becomes
\begin{align*}
&v_{a,a,c,c}(1,-1,1,-1)=\\ &\frac{1}{2}((v_{a}\wedge
v_{-a})(v_{c}\wedge v_{-c})-(v_{a}\wedge v_{c})(v_{-a}\wedge
v_{-c})+(v_{a}\wedge v_{-c})(v_{-a}\wedge v_{c})+\\&+(v_{c}\wedge
v_{-c})(v_{a}\wedge v_{-a})-(v_{-a}\wedge v_{-c})(v_{a}\wedge
v_{c})+(v_{-a}\wedge v_{c})(v_{a}\wedge v_{-c})).\end{align*}

\noindent By using the fact that $h\mathbb{1}=\lambda(h)\mathbb{1}$
for all $h\in H$, while $v_j\wedge v_{\nu k}\mathbb{1}=0$ for any
$1\leq j<k\leq 2N$ and $\nu\in\{+1,-1\}$, we can compute
$pr_\lambda(v_{a,a,c,c}(1,-1,1,-1)\cdot\mathbb{1})$.

\begin{align*}
&v_{a,a,c,c}(1,-1,1,-1)\cdot\mathbb{1}=\\
&\frac{1}{2}(2\epsilon_a\epsilon_c+(-\epsilon_a+\epsilon_c)-(-\epsilon_a-\epsilon_c))\cdot\mathbb{1}=\\
&(\epsilon_a\epsilon_c+\epsilon_c)\cdot\mathbb{1}
\end{align*}
\noindent Thus,
\[pr_\lambda(v_{a,a,c,c}(1,-1,1,-1)\cdot\mathbb{1})=\lambda(\epsilon_c)(\lambda(\epsilon_a)+1).\]

\noindent Above we have established that $Z(\lambda)$ is an
irreducible $A$-module if and only if $\hat{R}\subseteq\textrm{Ker}(
pr_\lambda)$. We have already seen that the only elements $r\in R$
which might cause a problem are those of weight equal to zero. For
the case "$n+1$ is even" the only such element is
$v_{a,a,c,c}(1,-1,1,-1)$, whose projection is equal to
$\lambda(\epsilon_c)(\lambda(\epsilon_a)+1)$. Hence, if $\lambda\in
H^*$ satisfies the condition that for any $a<c$,
$\lambda(\epsilon_c)(\lambda(\epsilon_a)+1)=0$, then $Z(\lambda)$ is
indeed an irreducible $A$-module.

\subsection{n+1 is odd}
\noindent Next, we treat the case: $n+1=2N+1$. Our basic Lie algebra $\wedge^{n-1}A$ is now $so(2N+1)$ with basis
given by elements of the form $v_i\wedge v_j$, where
$i$ and $j$ are included in the set $\{-N,\ldots,0,\ldots,N\}$.
We will apply the same strategy as in the even case,
namely: rewrite the generators of the ideal $Q(A)$ in terms of the
elements above, and afterwards find those which have weight
zero.\\
\\
\noindent Observe first that the elements $v_{\nu j}\wedge v_{\mu
k}$ have the same definition as in the previous case. This proves
that: \begin{align*} &\textrm{Span}\{x_{i_1,i_2,i_3,i_4}\}_{1\leq
i_1,i_2,i_3,i_4\leq
2N}=\\
&\textrm{Span}\{(v_{\alpha a}\wedge v_{\beta b})(v_{\gamma c}\wedge
v_{\delta d})+(v_{\alpha a}\wedge v_{\delta d})(v_{\beta b}\wedge
v_{\gamma c})- (v_{\alpha a}\wedge v_{\gamma c})(v_{\beta b}\wedge
v_{\delta
d})+\\
&+(v_{\gamma c}\wedge v_{\delta d})(v_{\alpha a}\wedge v_{\beta
b})+(v_{\beta b}\wedge v_{\gamma c})(v_{\alpha a}\wedge v_{\delta
d})-(v_{\beta b}\wedge v_{\delta d})(v_{\alpha a}\wedge v_{\gamma
c})\},\end{align*} where $1\leq a,b,c,d\leq N$ and $
\alpha,\beta,\gamma,\delta\in \{\pm 1\}$. This shows that if all
indices involved in $x_{a,b,c,d}$, are strictly less than $2N+1$,
then the "weight-zero" relations we are looking for, are those
obtained in the "$n+1$-even" case. Hence, we only need to look for
those "weight-zero" relations,
$x_{a,b,c,d}$, which involve the indices $a\leq b\leq c <d=2N+1$.\\
\\
\noindent By inverting the matrix which defines the elements
$v_0\wedge v_a$ and $v_{-a}\wedge v_0$ we obtain that:
\[e^{2a-1+\alpha,2N+1}=\frac{1}{\sqrt{2}}\sum_{\nu\in\{-1,1\}}\nu(i\nu)^\alpha v_0\wedge v_{\nu a},\]
where $\alpha\in\{0,1\}$. This gives us the following equality:
\begin{align*}
&x_{2a-1+\alpha,2b-1+\beta,2c-1+\gamma,2N+1}=\\
&\frac{1}{4\sqrt{2}}\sum_{\nu,\mu,\omega\in\{1,-1\}}(i\nu)^{\alpha}(i\mu)^{\beta}(i\omega)^{\gamma}\cdot\\
&\bigg(\omega (v_{\nu a}\wedge v_{\mu b})(v_0\wedge v_{\omega c})+
\nu (v_0\wedge v_{\nu a})(v_{\mu b}\wedge v_{\omega c})-\mu (v_{\nu a}\wedge v_{\omega c})(v_0\wedge v_{\mu b})\\
&+\omega (v_0\wedge v_{\omega c})(v_{\nu a}\wedge v_{\mu b})+\nu
(v_{\mu b}\wedge v_{\omega c})(v_0\wedge v_{\nu a})-\mu (v_0\wedge
v_{\mu b})(v_{\nu a}\wedge v_{\omega c})\bigg).
\end{align*}
At this point we could prove that the span of the elements on the
left-hand-side is equal to the span of the elements on the
right-hand-side and follow the same guide line as before. But, there
is no need for this, since the span of the elements on the left is
obviously included in the span of the elements on the right;
moreover the elements on the right have weight:
\[weight=\nu\epsilon_a+\mu\epsilon_b+\omega\epsilon_c.\]
For this weight to be zero, we would need that $a=b=c$. However, if
this is the case, then $\nu+\mu+\omega$ can not be 0, since
$\nu,\mu,\omega\in\{1,-1\}$. Thus, no new polynomials are added to
the set previously obtained.

\subsection{Making use of the graphical interpretation}

\noindent In the following we will demonstrate another way of
obtaining the same polynomials.

\noindent By the definition of the ideal $I(\lambda)$, it follows
    that in $V(\lambda)$ the elements $v_j\wedge v_k$ and
    $v_j\wedge v_{-k}$ are zero, where $j<k$. We use the graphical interpretation
    of  these elements to obtain the following equalities:\\
    \\
    \noindent
    $v_j\wedge v_k=\frac{1}{2}(\includegraphics[width=0.1\textwidth]{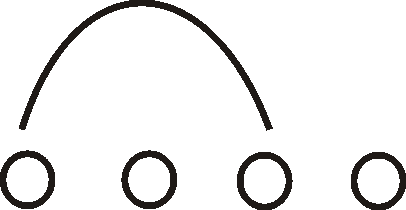}-\includegraphics[width=0.1\textwidth]{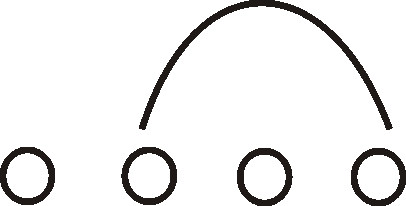}-i\includegraphics[width=0.1\textwidth]{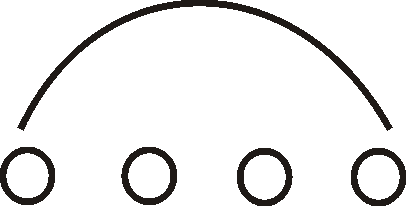}-i\includegraphics[width=0.1\textwidth]{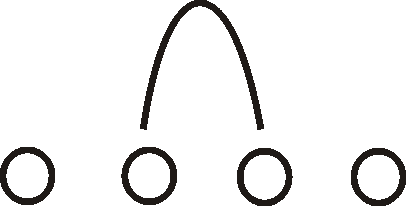})=0$,\\
    $v_j\wedge v_{-k}=\frac{1}{2}(\includegraphics[width=0.1\textwidth]{gplus1}+\includegraphics[width=0.1\textwidth]{gplus2}+i\includegraphics[width=0.1\textwidth]{glus3}-i\includegraphics[width=0.1\textwidth]{gplus4})=0$.\\
\\
    \noindent Remember that we chose $j<k$ and observe that in these equalities the four points are numbered by
    $2j-1,2j,2k-1,2k$. The computations presented below, should be seen as taking place in $U/(Q(A)+Ann(V(\lambda)))$. By adding and subtracting the two
    equalities we obtain the following two relations:\\
\\
    \noindent
    $\includegraphics[width=0.1\textwidth]{gplus1}=i\includegraphics[width=0.1\textwidth]{gplus4}$,\\
    $\includegraphics[width=0.1\textwidth]{gplus2}=-i\includegraphics[width=0.1\textwidth]{glus3}$.\\
\\
    \noindent They tell us that if an arc connects two points in
    the diagram which have different parities, then we can shift the
    left leg of the arc, such that the result is an
    arc between two points of the same parity. By doing this we
    acquire either $+i$ or $-i$ in front of the diagram.\\
\\
\noindent The ideal $I(\lambda)$ is a left ideal, hence:\\
\\
\noindent
$(\includegraphics[width=0.1\textwidth]{gplus1}+i\includegraphics[width=0.1\textwidth]{gplus4})(\includegraphics[width=0.1\textwidth]{gplus2}+i\includegraphics[width=0.1\textwidth]{glus3})=0$\\
\noindent and\\
$(\includegraphics[width=0.1\textwidth]{gplus2}-i\includegraphics[width=0.1\textwidth]{glus3})(\includegraphics[width=0.1\textwidth]{gplus1}-i\includegraphics[width=0.1\textwidth]{gplus4})=0.$\\
\\
\noindent Recall that the Cartan subalgebra $H$ was defined as
$\bigoplus_{i=1}^N\mathbb{C}\epsilon_i$, where
$\epsilon_j=ie^{2j-1,2j}=i\includegraphics[width=0.04\textwidth]{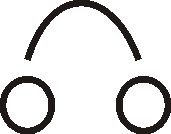}$
(a simple diagram with one arc connecting the points $2j-1$ and
$2j$). The element $\epsilon_j$ acts on the highest weight module by
multiplication with $\lambda(\epsilon_j)$, which we will denote by
$\lambda_j$. Using this fact we compute the two equalities
above.\\
\noindent The first equality becomes:
\begin{align*}
&(\includegraphics[width=0.1\textwidth]{gplus1}+i\includegraphics[width=0.1\textwidth]{gplus4})(\includegraphics[width=0.1\textwidth]{gplus2}+i\includegraphics[width=0.1\textwidth]{glus3})=\\
&=\includegraphics[width=0.1\textwidth]{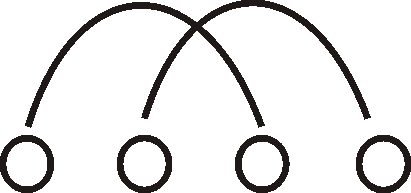}-\includegraphics[width=0.1\textwidth]{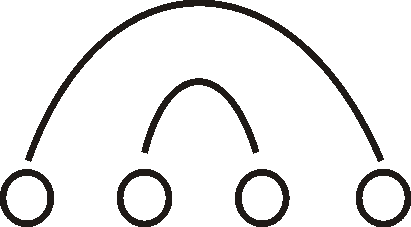}+i(\includegraphics[width=0.1\textwidth]{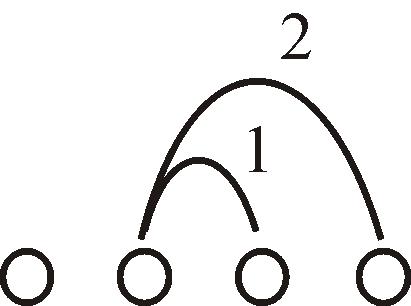}+\includegraphics[width=0.1\textwidth]{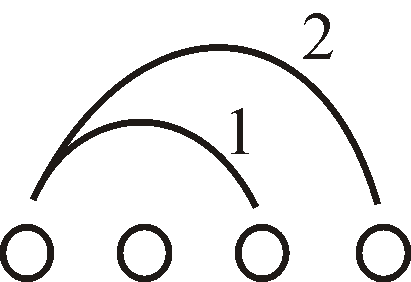})=\\
&=\includegraphics[width=0.1\textwidth]{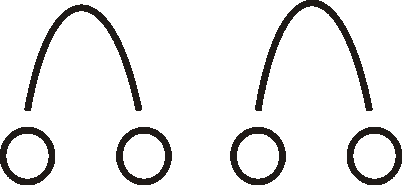}+i(\includegraphics[width=0.1\textwidth]{ating12}+\includegraphics[width=0.1\textwidth]{ating21})=\\
&=(i\includegraphics[width=0.1\textwidth]{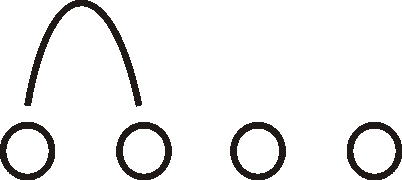})(-i\includegraphics[width=0.1\textwidth]{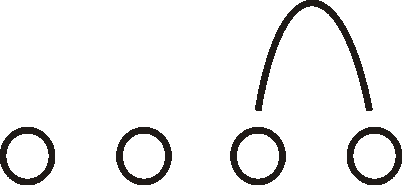})+i(\includegraphics[width=0.1\textwidth]{ating12}+\includegraphics[width=0.1\textwidth]{ating21})=\\
&=-\lambda_j\lambda_k+i(\includegraphics[width=0.1\textwidth]{ating12}+\includegraphics[width=0.1\textwidth]{ating21})=\\
&=-\lambda_j\lambda_k+i(\includegraphics[width=0.1\textwidth]{ating12}+\includegraphics[width=0.1\textwidth]{ating21})=\\
&=-\lambda_j\lambda_k+i(\includegraphics[width=0.1\textwidth]{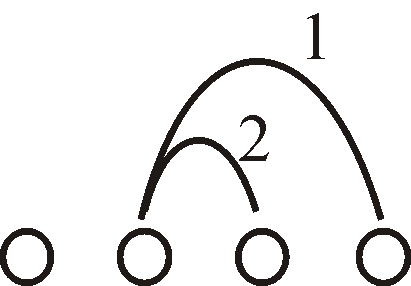}+\includegraphics[width=0.1\textwidth]{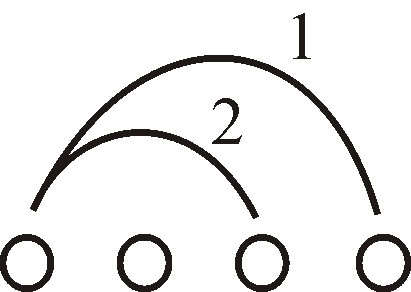}-2\includegraphics[width=0.1\textwidth]{epsk})=0,
\end{align*}
\noindent while the second can be rewritten as:
\begin{align*}
&(\includegraphics[width=0.1\textwidth]{gplus2}-i\includegraphics[width=0.1\textwidth]{glus3})(\includegraphics[width=0.1\textwidth]{gplus1}-i\includegraphics[width=0.1\textwidth]{gplus4})=\\
&=\includegraphics[width=0.1\textwidth]{intersect}-\includegraphics[width=0.1\textwidth]{inside}-i(\includegraphics[width=0.1\textwidth]{ating12a}+\includegraphics[width=0.1\textwidth]{ating21a})=\\
&=\includegraphics[width=0.1\textwidth]{separat}-i(\includegraphics[width=0.1\textwidth]{ating12a}+\includegraphics[width=0.1\textwidth]{ating21a})=\\
&=(i\includegraphics[width=0.1\textwidth]{epsj})(-i\includegraphics[width=0.1\textwidth]{epsk})-i(\includegraphics[width=0.1\textwidth]{ating12a}+\includegraphics[width=0.1\textwidth]{ating21a})=\\
&=-\lambda_j\lambda_k-i(\includegraphics[width=0.1\textwidth]{ating12a}+\includegraphics[width=0.1\textwidth]{ating21a})=0.
\end{align*}
\noindent Summing the two up, we obtain the following equation:
\begin{align*}
&-2\lambda_j\lambda_k-2i\includegraphics[width=0.1\textwidth]{epsk}=-2\lambda_j\lambda_k-2\lambda_k=0,
\end{align*}
hence the same polynomials in
$\lambda$:\begin{equation*}\lambda_k(\lambda_j+1)=0.\end{equation*}

\section{Primitive ideals}

\noindent As previously mentioned, primitive ideals of the universal
enveloping algebra $U(A)$, where $A$ is the simple $n$-Lie algebra,
correspond to the primitive ideals of the universal enveloping
algebra of the basic Lie algebra $\wedge^{n-1}A$ which include the
two-sided ideal $Q(A)$. Moreover, these primitive ideals are the
annihilators of highest weight, irreducible modules of
$\wedge^{n-1}A$.\\
\\
\noindent Let $Z(\lambda)$ be such a module, and denote by
$Ann(Z(\lambda))$ its annihilator. For $Ann(Z(\lambda))$ to be a
primitive ideal of $U(A)$, we would need that $Q(A)\subseteq
Ann(Z(\lambda))$, i.e. $Q(A)$ acts trivially on $Z(\lambda)$. Hence,
in order to find the primitive ideals of $U(A)$, we want to find
those weights $\lambda$, such that the annihilator of the
irreducible, highest weight module of weight $\lambda$ includes
$Q(A)$. These are precisely the weights we have determined in the
previous sections. Hence, we have already proved the following
theorem.
\begin{thm}
Let $I$ be a primitive ideal of $U$, the universal enveloping
algebra of the basic Lie algebra $\wedge^{n-1}A$. $I$ is a primitive
ideal of $U(A)$, the universal enveloping algebra of $A$, if and
only if $I$ is the annihilator of an irreducible, highest weight
module of $A$ with highest weight $\lambda$ of the form mentioned in
either Theorem \ref{j6} or Theorem \ref{j7}, depending on the parity
of $n+1$.
\end{thm}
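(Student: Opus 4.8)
The plan is to assemble this theorem purely from the structural results already in place, since no genuinely new computation is required. The backbone is the chain: (i) the bijection, established in Section~2.2, between primitive ideals of $U(A)=U/Q(A)$ and primitive ideals of $U$ containing $Q(A)$; (ii) Duflo's theorem \cite{Duflo}, which says that every primitive ideal of $U$ --- here $U=U(\wedge^{n-1}A)$ with $\wedge^{n-1}A\cong so(n+1)$ a complex (semi)simple Lie algebra --- is of the form $\textrm{Ann}_U Z(\lambda)$ for some $\lambda\in H^*$; and (iii) the classification of admissible weights in Theorem~\ref{t4.1}, equivalently Theorems~\ref{j6} and~\ref{j7}.

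First I would spell out the forward implication. Let $I$ be a primitive ideal of $U(A)$. By (i) it lifts to a unique primitive ideal $\tilde I$ of $U$ with $Q(A)\subseteq\tilde I$, and by (ii) we may write $\tilde I=\textrm{Ann}_U Z(\lambda)$. Since $\textrm{Ann}_U Z(\lambda)\subseteq J(\lambda)$ (anything killing $\mathbb{1}\in Z(\lambda)$ lies in $J(\lambda)$) while $Q(A)$ is two-sided, the inclusion $Q(A)\subseteq\tilde I$ is equivalent to $Q(A)\subseteq J(\lambda)$; by the lemma ``$A$ has a representation on $Z(\lambda)$ iff $pr_\lambda(\hat R)=0$'' together with the weight-zero analysis of Section~4 this forces $\lambda$ to be exactly one of the weights listed in Theorem~\ref{t4.1}. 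Pushing back along (i), $I$ is then the annihilator in $U(A)$ of the irreducible highest weight $A$-module $Z(\lambda)$ with $\lambda$ of the stated form. For the converse, take $\lambda$ of the form in Theorem~\ref{j6} or~\ref{j7}; by Theorem~\ref{t4.1} we have $Q(A)\subseteq J(\lambda)$, hence $Q(A)\subseteq\textrm{Ann}_U Z(\lambda)$ since $Q(A)$ is a two-sided ideal inside the left ideal $J(\lambda)$, so $Z(\lambda)$ descends to an irreducible $A$-module (irreducibility being preserved, by the result of Dzhumadil'daev \cite{Dzhumadil'daev} recalled in Section~2.2), and $\textrm{Ann}_{U(A)}Z(\lambda)$ is a primitive ideal of $U(A)$.

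The only delicate point --- and the step I expect to be the main obstacle, though it is a bookkeeping matter rather than a substantive one --- is that distinct weights $\lambda$ may determine the same primitive ideal, so the ``if and only if'' must be read at the level of ideals, not of weights. This is handled automatically by routing every implication through the bijection in (i) and through Duflo's theorem in (ii): neither direction asserts uniqueness of $\lambda$, only that $I$ equals the annihilator of \emph{some} $Z(\lambda)$ with $\lambda$ on the list. Everything else is a quotation of Sections~2--4, which is precisely why the paper can remark that the theorem has ``already been proved.''
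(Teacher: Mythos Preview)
Your proposal is correct and mirrors the paper's own argument exactly: the paper simply states that the theorem has ``already been proved,'' having assembled in the preceding paragraphs precisely the three ingredients you list --- the bijection between primitive ideals of $U(A)$ and primitive ideals of $U$ containing $Q(A)$, Duflo's theorem, and the classification of admissible highest weights from Theorem~\ref{t4.1}. Your write-up is just a more explicit unpacking of that sentence, including the equivalence $Q(A)\subseteq\mathrm{Ann}_U Z(\lambda)\Leftrightarrow Q(A)\subseteq J(\lambda)$ (which is Lemma~3.1 in the paper) and the observation that the ``if and only if'' is at the level of ideals rather than weights.
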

\noindent As an example, we will show that the Joseph ideal is a
primitive ideal of $U(A)$, the enveloping algebra of the simple
$n$-Lie algebra $A$. To avoid complications we fix $n>4$. \\
\\
\noindent In \cite{Joseph}, Joseph constructed a primitive,
completely prime ideal in $U(so(n+1))$ corresponding to the closure
of the minimal nilpotent orbit of the coadjoint action; and computed
its infinitesimal character. We will denote this ideal by $J$. To
show that $J$ is a primitive ideal of $U(A)$, we must prove that
$Q(A)\subseteq J$. Denote by $\alpha$ the highest root of $so(n+1)$
and recall that $S^2(so(n+1))=V(2\alpha)\oplus V(0)\oplus W$. In
\cite{GS} it was shown that the Joseph ideal $J$ is equal to the
ideal generated by $W$ and $C-c_0$, where $c_0$ is the eigenvalue
for the
Casimir $C$ for the infinitesimal character that Joseph obtained.\\
\\
\noindent Above, we have proven that $R\subseteq S^2(so(n+1))$ and
that $S^2(so(n+1))$ decomposes as $R\oplus \textrm{Ker}\phi$, where
$\phi:S^2(so(n+1))\to\wedge^4 A$ is defined on monomials as
\[(v_i\wedge v_j)\odot(v_k\wedge v_l)\mapsto v_i\wedge v_j\wedge v_k\wedge
v_l.\] Hence, in order to show $Q(A)\subseteq J$, we must prove that
$R\cap (V(2\alpha)\oplus V(0))=\{0\}$, or equivalently that
$V(2\alpha)\oplus V(0)\subseteq \textrm{Ker}\phi$. Taking into
account that the highest root of the Lie algebra $so(n+1)$ is given
by $\epsilon_1+\epsilon_2$ and that $V(2\alpha)$ is generated by
$(v_1\wedge v_2)\odot(v_1\wedge v_2)$, this follows directly. Thus,
we may conclude that the Joseph ideal $J$ is indeed a primitive
ideal of $U(A)$.\\

\bibliographystyle{amsplain}

\begin{thebibliography}{11}


\bibitem{BL}
J.~Bagger and N.~Lambert, Modeling multiple M2's, Phys.\ Rev.\ D
{\bf 75}, 045020 (2007).

\bibitem{BL2}
J.~Bagger and N.~Lambert, Comments On Multiple M2-branes, JHEP {\bf
0802}, 105 (2008).

\bibitem{BL3}
J.~Bagger and N.~Lambert, Three-Algebras and N=6 Chern-Simons Gauge
Theories, Phys. Rev. D 79, 025002 (2009).

\bibitem{Duflo}M.~Duflo, Sur la classification des id�aux primitifs dans l'alg�bre enveloppante d'une alg�bre de Lie semi-simple.
 Ann. of Math. (2)  105  (1977), no. 1, 107--120.

\bibitem{Dzhumadil'daev} A.~S.~Dzhumadil'daev, Representations of vector product $n$-Lie algebras.  Comm. Algebra  32  (2004),  no. 9,
3315-3326.

\bibitem{Filippov}V.~Filippov, n-Lie algebras. Sibirsk. Mat. Zh. 26 (1985), no. 6, 126-140.

\bibitem{GS}W.~T.~Gan(1-UCSD); G.~Savin(1-UT),
Uniqueness of Joseph ideal. Math. Res. Lett. 11 (2004), no. 5-6,
589--597.

\bibitem{Linden}
X.~Garcia-Martinez, R.~Turdibaev, T.~van der Linden,
Do $n$-Lie algebras have universal enveloping algebras, arXiv:1508.06940

\bibitem{G2}
A.~Gustavsson, Selfdual strings and loop space Nahm equations,
JHEP04 (2008) 083.

\bibitem{G}
A.~Gustavsson, Algebraic structures on parallel M2-branes, Nuclear
Physics B Volume 811, Issues 1-2, 66-76 (2009).

\bibitem{G3}
A.~Gustavsson, One-loop correction to Bagger-Lambert theory, Nuclear
Physics B 807, Issues 1-2, 315-333 (2009).

\bibitem{Joseph}A.~Joseph, The minimal orbit in a simple Lie algebra and its associated maximal ideal. Ann. Sci. Ecole
Norm Sup. 9 (1976), no. 1, 1-29.

\bibitem{Ling}W.~Ling, On the structure of n-Lie algebras. Dissertation, University-GHS-Siegen, Siegn, 1993.

\bibitem{N}
Y.~Nambu, Generalized Hamiltonian dynamics, Phys.\ Rev.\ D 7,
2405-2412 (1973).



\end{thebibliography}
\def\lllll{}

\appendix
\section{Another method for the smallest case}

\noindent  For the case $n+1=4$ we will give another method for
obtaining the possible highest weights which transform
$Z(\lambda)=U/J(\lambda)$ into an irreducible representation of $A$.
If $n+1=4$ it follows that $A$ is the simple 3-Lie algebra and its
basic Lie algebra is $so(4)$. Unlike before, $so(4)$ is not a simple
Lie algebra, but it is isomorphic to $sl(2)\oplus sl(2)$. Our
approach will be to find the two $sl(2)$'s sitting inside $so(4)$
and compute the generator of $Q(A)$ in terms of the elements of
these two simple Lie algebras.\footnote{Notice that in this case
$Q(A)$ will have only one generator.}
We will then impose the condition that $Q(A)$ acts trivially on the highest weight module $Z(\lambda)$.\\
\\
Let $\bar{x}_1=v_1\wedge v_{-2}, \bar{y}_1=v_{-1}\wedge v_2$ and
$\bar{x}_2=v_1\wedge v_2, \bar{y}_2=v_{-1}\wedge v_{-2}.$ Then
$[\bar{x}_1,\bar{y}_1]=-\epsilon_1+\epsilon_2=:\bar{h}_1$ and
$[\bar{x}_2,\bar{y}_2]=-\epsilon_1-\epsilon_2=:\bar{h}_2$, while
$[\bar{x}_1,\bar{y}_2]=[\bar{x}_2,\bar{y}_1]=0$. If we compute the
rest of the commutators we obtain:
\[[\bar{h}_1,\bar{x}_1]=-2\bar{x}_1\textrm{  }[\bar{h}_1,\bar{y}_1]=2\bar{y}_1\]
\[[\bar{h}_2,\bar{x}_2]=-2\bar{x}_2\textrm{  }[\bar{h}_2,\bar{y}_2]=2\bar{y}_2\]
\[[\bar{h}_1,\bar{x}_2]=[\bar{h}_1,\bar{y}_2]=[\bar{h}_2,\bar{x}_1]=[\bar{h}_2,\bar{y}_1]=0.\]
Thus, if we define
\[x_j=i\bar{x}_j\textrm{,  }y_j=i\bar{y}_j\textrm{ and }h_j=-\bar{h}_j, \forall j\in \{1,2\}
\] we obtain two $sl(2)$'s, namely $\{x_1,h_1,y_1\}$ and
$\{x_2,h_2,y_2\}$.\\
\\
\noindent In $so(4)$ the only generator of the two-sided ideal
$Q(A)$ is the relation:
\[X=e^{12}e^{34}+e^{14}e^{23}-e^{13}e^{24}.\]
Since $h_1=\epsilon_1-\epsilon_2$ and $h_2=\epsilon_1+\epsilon_2$ it
follows that \[e^{12}=\frac{h_1+h_2}{2i}\textrm{ and
}e^{34}=\frac{h_2-h_1}{2i}.\] Next, we compute the rest of the
products in the relation $X$. We obtain
\[e^{23}=\frac{1}{2}(x_1+x_2-y_1-y_2)\]
\[e^{14}=\frac{1}{2}(-x_1+x_2+y_1-y_2)\]
\[e^{13}=\frac{1}{2i}(x_1+x_2+y_1+y_2)\]
\[e^{24}=\frac{-1}{2i}(-x_1+x_2-y_1+y_2),\] which in
turn gives
us:\[e^{14}e^{23}-e^{13}e^{24}=\frac{h_1+2y_1x_1-h_2-2y_2x_2}{2}.\]
The relation $X$ becomes then:
\[X=\frac{h_1^2-h_2^2}{4}+\frac{h_1-h_2}{2}+y_1x_1-y_2x_2.\]

\noindent Denote by $V(\mu_1,\mu_2)=<\mathbb{1}>$ some highest
weight module of $so(4)$. For this module to be a module for the
simple 3-Lie algebra $A$ we want the relation $X$ to act trivially
on $V(\mu_1,\mu_2)$. Since $h_j\mathbb{1}=\mu_j\mathbb{1}$ for all
$j\in\{1,2\}$ and $x_j\mathbb{1}=0$, we can conclude that $X$ acts
on $V(\mu_1,\mu_2)$ as:\[\mu_1^2-\mu_2^2+2\mu_1-2\mu_2.\] We impose
the condition that
\[\mu_1^2-\mu_2^2+2\mu_1-2\mu_2=0\Leftrightarrow
(\mu_1+1)^2=(\mu_2+1)^2.\] Hence, we have obtained two solutions,
namely \[\mu_1=\mu_2\textrm{ and }\mu_1+\mu_2=-2.\]

\noindent Thus, $V(\mu_1,\mu_2)=V(\mu_1)\otimes V(\mu_2)$ is a 3-Lie
algebra module if either both weights coincide, or their sum is -2.\\
\\
\noindent There still remains the matter of determining $\lambda_1$
and $\lambda_2$ (in the notation previously used). The formulas for
$h_1$ and $h_2$ tell us that $\mu_1=\lambda_1-\lambda_2$ while
$\mu_2=\lambda_1+\lambda_2$. If $\mu_1=\mu_2$ we obtain that
$\lambda_2=0$ and if $\mu_1+\mu_2=-2$ we obtain the solution
$\lambda_1=-1$. We observe that these are also the solutions of the
polynomial: $\lambda_2(\lambda_1+1)=0$.

\end{document}